\newtheorem*{main}{Main Theorem}
\newtheorem{theorem}{Theorem}
\newtheorem{lemma}[theorem]{Lemma}
\theoremstyle{definition}
\newtheorem{definition}[theorem]{Definition}
\theoremstyle{remark}
\newtheorem{remark}[theorem]{Remark}
\DeclareMathOperator{\Ind}{Ind}
\DeclareMathOperator{\ch}{ch}
\newcommand\imm{\preccurlyeq}
\newcommand\CC{\mathbf C}
\title[Elementwise invariant vectors]{Existence of Elementwise Invariant Vectors in Representations of Symmetric Groups}
\author[]{Amrutha P}
\address{Chennai Mathematical Institute, Siruseri}
\email{\small{amruthap@cmi.ac.in, amri@imsc.res.in, velmurugan@imsc.res.in}}
\author[Amrutha P, Amritanshu Prasad and Velmurugan S]{Amritanshu Prasad and Velmurugan S}
\address{The Institute of Mathematical Sciences, Chennai}
\address{Homi Bhabha National Institute, Mumbai}
\subjclass{20C30,05E10}
\begin{document}
\begin{abstract}
  We determine when a permutation with cycle type $\mu$ admits a non-zero invariant vector in the irreducible representation $V_\lambda$ of the symmetric group.
  We find that a majority of pairs $(\lambda,\mu)$ have this property, with only a few simple exceptions.
\end{abstract}

\maketitle
\section{Introduction}
\label{section:intro}
Let $(\rho_\lambda,V_\lambda)$ denote the irreducible representation of $S_n$ associated with a partition $\lambda$ of $n$. Let $w_\mu\in S_n$ denote a permutation with cycle type $\mu$ for a partition $\mu$ of $n$.
A vector $v\in V_\lambda$ is said to be an invariant vector for $w\in S_n$ if $\rho_\lambda(w)v=v$.
\label{sec:statement}
\begin{main}
  \label{theorem:main}
  The only pairs of partitions $(\lambda,\mu)$ of a given integer $n$ such that $w_\mu$ does not admit a nonzero invariant vector in $V_\lambda$ are the following:
  \begin{enumerate}
  \item $\lambda=(1^n)$, $\mu$ is any partition of $n$ for which $w_\mu$ is odd,
  \item $\lambda=(n-1,1)$, $\mu=(n)$, $n\geq 2$,
  \item $\lambda=(2,1^{n-2})$, $\mu=(n)$, $n\geq 3$ is odd,
  \item $\lambda=(2^2,1^{n-4})$, $\mu=(n-2,2)$, $n\geq 5$ is odd,
  \item $\lambda=(2,2)$, $\mu=(3,1)$,
  \item $\lambda=(2^3)$, $\mu=(3,2,1)$,
  \item $\lambda=(2^4)$, $\mu=(5,3)$,
  \item $\lambda=(4,4)$, $\mu=(5,3)$,
  \item $\lambda=(2^5)$, $\mu=(5,3,2)$.
  \end{enumerate}
\end{main}

In the special case where $\mu=(n)$, the following characterisation was obtained by Joshua Swanson~\cite{swanson} (see also \cite{MR4328100}), proving a conjecture of Sheila Sundaram~\cite[Remark~4.8]{MR3855421}.
\begin{theorem}
  \label{theorem:swanson}
  For every positive integer $n$ and partition $\lambda$ of $n$, $w_{(n)}$ admits a non-zero invariant vector in $V_\lambda$ except in the following cases:
  \begin{enumerate}
  \item $\lambda=(n-1,1)$,
  \item $\lambda=(1^n)$ when $n$ is even,
  \item $\lambda=(2,1^{n-2})$ when $n\geq 3$ is odd.
  \end{enumerate}
\end{theorem}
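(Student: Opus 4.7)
By Frobenius reciprocity, the dimension of the space of $\langle w_{(n)}\rangle$-invariants in $V_\lambda$ equals the multiplicity of $V_\lambda$ in $\Ind_{C_n}^{S_n}\mathbf{1}$, where $C_n = \langle w_{(n)}\rangle$. A theorem of Kra\'{s}kiewicz and Weyman identifies this multiplicity with
\[
a_\lambda \;:=\; \#\{T \in \mathrm{SYT}(\lambda) : \mathrm{maj}(T) \equiv 0 \pmod n\},
\]
so the task reduces to determining the partitions $\lambda$ of $n$ with $a_\lambda = 0$.

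The three exceptional families can be verified by direct enumeration. An SYT of hook shape $\lambda = (n-k, 1^k)$ is determined by the $k$-subset $S \subseteq \{2, \ldots, n\}$ of entries placed below the corner in the first column, and a short inspection of descent sets yields $\mathrm{maj}(T_S) = \bigl(\sum_{a \in S} a\bigr) - k$. For $k = 1$ these values sweep out $1, 2, \ldots, n-1$, none divisible by $n$; for $k = n-1$ the unique tableau has $\mathrm{maj} = n(n-1)/2$, divisible by $n$ exactly when $n$ is odd; for $k = n-2$, parametrising by the one missing element $b \in \{2, \ldots, n\}$, a direct computation shows $a_\lambda = 0$ precisely when $n$ is odd. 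This matches (1)--(3) on the nose.

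It remains to exhibit, for every other $\lambda \vdash n$, an SYT with $\mathrm{maj} \equiv 0 \pmod n$. For hooks with $2 \leq k \leq n-3$ (and for the non-exceptional parities at the extreme values $k = n-2, n-1$), it suffices to show that the sums $\sum_{a \in S} a$ over $k$-subsets $S \subseteq \{2, \ldots, n\}$ cover every residue class modulo $n$: swapping an element $a \in S$ with a neighbour $a \pm 1 \notin S$ changes the sum by $\pm 1$, so all integers in the range $[\min, \max]$ are attained, and an elementary check shows this range has length at least $n$ throughout the permitted window of $k$.

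For non-hook $\lambda$, I would evaluate the $q$-hook length formula
\[
F_\lambda(q) \;=\; \sum_{T \in \mathrm{SYT}(\lambda)} q^{\mathrm{maj}(T)} \;=\; \frac{[n]_q!}{\prod_{c \in \lambda}[h(c)]_q},
\]
using the identity $n\, a_\lambda = \sum_{k=0}^{n-1} F_\lambda(\zeta^k)$ with $\zeta = e^{2\pi i/n}$. Since no cell of a non-hook partition of $n$ has hook length equal to $n$, the numerator of $F_\lambda$ vanishes to order one at $\zeta$ while the denominator does not, giving $F_\lambda(\zeta) = 0$; Galois invariance extends this vanishing to every primitive $n$-th root. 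More generally, $F_\lambda(\zeta^k) = 0$ unless the order $d$ of $\zeta^k$ is a divisor of $n$ for which $\lambda$ has empty $d$-core, in which case the value is computed from the $d$-quotient. The main obstacle is positivity of the resulting sum: the surviving terms may carry mixed signs, so one needs either a sharp quantitative bound showing $f^\lambda = F_\lambda(1)$ dominates the magnitude of the remaining contributions, or --- more combinatorially --- an explicit construction of an SYT of shape $\lambda$ with major index divisible by $n$, obtained by starting from a canonical tableau and applying jeu de taquin or dual equivalence moves that shift $\mathrm{maj}$ by controlled increments.
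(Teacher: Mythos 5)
There is a genuine gap. First, for context: the paper does not prove this statement at all---it is quoted as Swanson's theorem and used as a black box, so there is no internal proof to compare against. Your reduction, via Frobenius reciprocity and the Kra\'skiewicz--Weyman theorem, to showing that $a_\lambda=\#\{T\in\mathrm{SYT}(\lambda):\mathrm{maj}(T)\equiv 0\ (\mathrm{mod}\ n)\}$ is positive outside the three listed families is the correct framework (and is the one Swanson himself works in). Your hook analysis is essentially complete and correct: the formula $\mathrm{maj}(T_S)=\sum_{a\in S}a-k$ is right, the three exceptional families fall out of it, and the consecutive-swap argument shows the subset sums for $2\le k\le n-3$ fill an interval containing at least $n$ integers (note the interval has length $k(n-k-1)\ge n-1$, which is what you actually need; ``length at least $n$'' is slightly overstated but harmless).

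The problem is the non-hook case, which is where the entire substance of the theorem lies, and which you have only sketched. The identity $n\,a_\lambda=\sum_{k=0}^{n-1}F_\lambda(\zeta^k)$ and the vanishing of $F_\lambda$ at primitive $n$-th roots of unity are correct, but the surviving terms $F_\lambda(\zeta^k)$ for $\zeta^k$ of order $d\mid n$, $d<n$, are generically nonzero and carry signs determined by the $d$-quotient; there is no soft reason for the sum to be positive, and indeed naive bounds on $|F_\lambda(\zeta^k)|$ against $f^\lambda=F_\lambda(1)$ are not uniformly sufficient. You acknowledge this yourself (``the main obstacle is positivity of the resulting sum'') and offer two possible remedies---a quantitative estimate or an explicit tableau construction---without carrying out either. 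Resolving exactly this obstacle is the content of Swanson's paper, which requires a genuinely nontrivial argument (a careful construction of tableaux achieving prescribed major index residues, together with the classification of the exceptional pairs). As written, your proposal establishes the exceptional cases and the hook shapes but does not prove the theorem for general $\lambda$.
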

We use the above result and the Littlewood-Richardson rule to obtain our main theorem.

For each partition $\mu$ of $n$, let $C_\mu$ denote the cyclic group generated by $w_\mu$.
The permutation $w_\mu$ admits a non-zero invariant vector in $V_\lambda$ if and only if the trivial representation of $C_\mu$ occurs in the restriction of $V_\lambda$ to $C_\mu$.
By Frobenius reciprocity, this is equivalent to $V_\lambda$ occurring in the induced representation $\Ind_{C_\mu}^{S_n} 1$.

A closely related problem was considered by Sheila Sundaram who proved the following result \cite[Theorem~5.1]{MR3805649}.
\begin{theorem}
  \label{theorem:sundaram}
  Let $\mu$ be a partition of an integer $n\neq 4,8$.
  Let $Z_\mu$ denote the centralizer of $w_\mu$ in $S_n$.
  Then $V_\lambda$ occurs in $\Ind_{Z_\mu}^{S_n}1$ for every partition $\lambda$ of $n$ if and only if $\mu$ has at least two parts, and all its parts are odd and distinct.
\end{theorem}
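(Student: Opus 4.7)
The plan is to translate the problem via Frobenius reciprocity and Mackey--style induction in stages, dispose of necessity by exhibiting a missing irreducible in each failure mode, and establish sufficiency by combining Swanson's Theorem~\ref{theorem:swanson} with the Littlewood--Richardson rule. Concretely, $V_\lambda$ occurs in $\Ind_{Z_\mu}^{S_n}1$ iff $|Z_\mu|^{-1}\sum_{z\in Z_\mu}\chi^\lambda(z)>0$; when $\mu=(\mu_1,\ldots,\mu_k)$ has distinct parts, $Z_\mu$ is the abelian product $C_{\mu_1}\times\cdots\times C_{\mu_k}$, the $i$-th factor generated by the $i$-th cycle of $w_\mu$, and when a part has multiplicity $m\geq 2$, an $S_m$ wreath factor permuting the equal cycles appears.

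Necessity splits into three failure modes. If $\mu=(n)$, Theorem~\ref{theorem:swanson} itself supplies a missing $V_\lambda$, e.g.\ $\lambda=(n-1,1)$. If $\mu\neq(n),(1^n)$ has an even part or a repeated part of size $\geq 2$, then $Z_\mu$ contains a permutation of odd sign---an even-length cycle is odd, and a swap of two equal odd-length cycles uses an odd number of transpositions---so the sign character restricts nontrivially to $Z_\mu$ and $\sum_{z\in Z_\mu}\mathrm{sgn}(z)=0$, forcing $V_{(1^n)}$ to be absent. The residual case $\mu=(1^n)$ is immediate since $\Ind_{S_n}^{S_n}1$ is the trivial representation.

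For sufficiency, assume $k\geq 2$ with $\mu_1>\cdots>\mu_k$ all odd. Induction in stages gives
$$\Ind_{Z_\mu}^{S_n}1\;=\;\Ind_{S_{\mu_1}\times\cdots\times S_{\mu_k}}^{S_n}\bigl(W_{\mu_1}\boxtimes\cdots\boxtimes W_{\mu_k}\bigr),\qquad W_m:=\Ind_{C_m}^{S_m}1.$$
Theorem~\ref{theorem:swanson} guarantees that every $V_\nu$ occurs in $W_{\mu_i}$ except the two hooks $\nu=(\mu_i-1,1)$ and $\nu=(2,1^{\mu_i-2})$ when $\mu_i\geq 3$; in particular $V_{(\mu_i)}$ and $V_{(1^{\mu_i})}$ are always present because $\mu_i$ is odd. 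By Frobenius reciprocity and the Littlewood--Richardson rule, sufficiency reduces to the statement that for every $\lambda\vdash n$ one can find non-exceptional $\nu^{(i)}\vdash\mu_i$ with $c^\lambda_{\nu^{(1)}\cdots\nu^{(k)}}>0$.

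This combinatorial step is the main obstacle. The simplest attempt restricts $\nu^{(i)}$ to $\{(\mu_i),(1^{\mu_i})\}$, which converts the question into whether $\lambda$ can be built from the empty diagram by successively appending horizontal or vertical strips of sizes $\mu_1,\ldots,\mu_k$; this succeeds for most $\lambda$, but breaks down on shapes such as $\lambda=(2,2)$ with $\mu=(3,1)$, which is precisely the $n=4$ exception. For recalcitrant $\lambda$, one invokes richer non-exceptional constituents of some $W_{\mu_i}$, for instance $(\mu_i-2,2)$ or $(3,1^{\mu_i-3})$, and writes down an explicit Littlewood--Richardson filling after splitting into manageable families of $\lambda$ (bounded number of rows, near-rectangular, and so on). Since the dimensions $n=4$ and $n=8$ admit only finitely many $\mu$ with distinct odd parts ($\{(3,1)\}$ and $\{(7,1),(5,3)\}$ respectively), they are confirmed as genuine exceptions by direct case-check, completing the proof.
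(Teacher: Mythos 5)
A preliminary point: the paper does not prove Theorem~\ref{theorem:sundaram} at all --- it is quoted from Sundaram \cite{MR3805649} as context --- so there is no in-paper proof to compare against. Judged on its own terms, your outline has the right architecture, and it is essentially the architecture the authors use for their own Main Theorem (with $C_\mu$ in place of $Z_\mu$): reduce by Frobenius reciprocity and induction in stages to the product $\prod_i \ch\Ind_{C_{\mu_i}}^{S_{\mu_i}}1$, feed in Swanson's Theorem~\ref{theorem:swanson}, and finish with the Littlewood--Richardson rule. Your necessity half is essentially sound, with one slip: you restrict the repeated-part argument to parts of size at least $2$, but a repeated part of size $1$ also puts an odd permutation (a transposition of two fixed points) into $Z_\mu$; your own parity count for swapping two equal odd cycles already covers $m=1$, so state it that way.

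The genuine gap is in sufficiency. The reduction to ``find non-exceptional $\nu^{(i)}\vdash\mu_i$ with $c^{\lambda}_{\nu^{(1)}\cdots\nu^{(k)}}>0$'' is correct, but that statement \emph{is} the theorem, and you do not prove it: you note that the naive choice $\nu^{(i)}\in\{(\mu_i),(1^{\mu_i})\}$ fails for some $\lambda$ and then appeal to ``richer constituents'' and ``manageable families'' without specifying the case division or exhibiting the fillings. The analogous step for $C_\mu$ occupies the bulk of this paper: one needs a canonical LR filling (Lemma~\ref{lemma:alphabeta}), a careful choice of a non-exceptional $\beta\subset\lambda$ (Lemma~\ref{lemma:choose-beta}), and then the long case analyses of Lemmas~\ref{lemma:pq}, \ref{lemma:alphabeta1}, \ref{lemma:simple} and~\ref{lemma:power}, including configurations where the product bound genuinely fails and a separate argument (e.g.\ via the sign representation of a Young subgroup) must be substituted. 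That the danger is real even under your hypotheses is shown by $n=8$, $\mu=(5,3)$: here $Z_\mu=C_{5}\times C_{3}$ and $V_{(4,4)}$, $V_{(2^4)}$ really are missing, which is exactly why $n=8$ is excluded. So the exceptional dimensions are not merely ``confirmed by case-check'' at the end; their existence signals that the general combinatorial claim is delicate and must be argued in full. As written, the proposal is a correct plan, not a proof.
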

A conjugacy class in a group $G$ whose permutation representation for the conjugation action contains every irreducible representation of $G$ is called a \emph{global class}.
Thus her theorem characterizes the global classes of symmetric groups.

Since $C_\mu$ is a subgroup of $Z_\mu$, $\Ind_{Z_\mu}^{S_n}1$ is a subrepresentation of $\Ind_{C_\mu}^{S_n}1$.
Hence if $V_\lambda$ occurs in $\Ind_{Z_\mu}^{S_n}1$, it also occurs in $\Ind_{C_\mu}^{S_n}1$.
If the parts of $\mu$ are distinct and pairwise relatively prime, then $Z_\mu=C_\mu$.
In this case our result is consistent with hers.
Our proof strategy is also quite similar to hers.

Dipendra Prasad and Ravi Raghunathan~\cite{zbMATH07612832} proposed a partial order on automorphic representations called immersion.
Adapted to finite groups, it may be defined as follows. 
\begin{definition}
  Given representations $(\rho,V)$ and $(\sigma,W)$ of $G$, say that \emph{$V$ is immersed in $W$}, denoted $V \imm W$, if for every $g\in G$ and every $\lambda\in \CC$, the multiplicity of $\lambda$ as an eigenvalue of $\rho(g)$ does not exceed the multiplicity of $\lambda$ as an eigenvalue of $\sigma(g)$. 
\end{definition}
In particular, if $V$ is a subrepresentation of $W$, then $V\imm W$.
In the context of immersion, our main theorem implies the following result.
\begin{theorem}
  For every positive integer $n$, $V_{(n)}\imm V_\lambda$ unless $\lambda$ is one of the partitions of $n$ that occur in the statement of the main theorem.
  %some modifications required it's seems
  Similarly $V_{(1^n)}\imm V_\lambda$ unless the conjugate of $\lambda$ is one of the partitions that occurs in the statement of the main theorem.
\end{theorem}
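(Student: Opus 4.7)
My plan is to reduce each assertion to the main theorem by spelling out the eigenvalue content of the immersion condition. The first observation is that $V_{(n)}$ is the one-dimensional trivial representation, so on any $g\in S_n$ its operator is the scalar $1$ with multiplicity one and no other eigenvalues occur. Thus $V_{(n)}\imm V_\lambda$ is equivalent to asking that $1$ occur as an eigenvalue of $\rho_\lambda(g)$ for every $g\in S_n$, that is, that every $g$ admit a non-zero invariant vector in $V_\lambda$. Since this condition depends only on the conjugacy class of $g$, it suffices to quantify over the representatives $w_\mu$, and the main theorem describes exactly those $\lambda$ for which some $w_\mu$ fails to have a non-zero invariant vector. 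This settles the first statement.

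For the second statement I would invoke the standard identity $V_\lambda\otimes V_{(1^n)}\cong V_{\lambda'}$, where $\lambda'$ is the conjugate partition. Choosing an intertwiner $\phi\colon V_\lambda\to V_{\lambda'}$ yields the relation $\rho_{\lambda'}(g)=\mathrm{sgn}(g)\,\phi\,\rho_\lambda(g)\,\phi^{-1}$, so the spectrum of $\rho_{\lambda'}(g)$ is obtained from that of $\rho_\lambda(g)$ by scaling every eigenvalue by $\mathrm{sgn}(g)$. Since $V_{(1^n)}$ is one-dimensional with eigenvalue $\mathrm{sgn}(g)$ on $g$, the condition $V_{(1^n)}\imm V_\lambda$ says exactly that $\mathrm{sgn}(g)$ is always an eigenvalue of $\rho_\lambda(g)$; by the eigenvalue translation above, this is equivalent to $1$ always being an eigenvalue of $\rho_{\lambda'}(g)$, i.e.\ to every $g$ admitting a non-zero invariant vector in $V_{\lambda'}$. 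Applying the first case to $\lambda'$ finishes the proof.

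The main obstacle here is essentially absent: because both $V_{(n)}$ and $V_{(1^n)}$ are one-dimensional, the multiplicity half of the immersion condition is automatic, and the remaining work amounts to a direct translation through the sign-twist isomorphism. All the substantive combinatorics has already been packaged into the main theorem, so the argument is a short bookkeeping exercise.
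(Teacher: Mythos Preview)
Your argument is correct and follows essentially the same route as the paper: the first assertion is reduced directly to the main theorem via the observation that immersion of a one-dimensional trivial module amounts to the existence of an invariant vector for every $g$, and the second is deduced from the first by the sign-twist $V_\lambda\otimes V_{(1^n)}\cong V_{\lambda'}$. The only cosmetic difference is that the paper packages the second step as the general remark that $V\imm W\iff V\otimes\gamma\imm W\otimes\gamma$ for any linear character $\gamma$, whereas you unwind this explicitly at the level of eigenvalues.
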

\begin{proof}
The first assertion is a direct consequence of our main theorem.
The second follows from the first using the observation that if $\gamma:G\to \CC^*$ is a multiplicative character, then $V\imm W$ if and only if $V\otimes \gamma\imm W\otimes \gamma$.
Taking $\gamma$ to be the sign character of $S_n$, we get $V_{(n)}\imm W_\lambda$ if and only if $V_{(1^n)}\imm W_{\lambda'}$, where $\lambda'$ is the partition conjugate to $\lambda$.
\end{proof}
\subsection*{Supporting Code}
Some steps in the proof involve direct calculations using the Sage Mathematical Software system \cite{sagemath}.
Code for carrying out these calculations can be downloaded from:\\
\href{https://www.imsc.res.in/~amri/invariant_vectors/}{https://www.imsc.res.in/\~{}amri/invariant\_vectors/}

\section{Proof of the Main Theorem}
\label{section:proof}
Following \cite[Section I.2]{MR3443860}, let $\Lambda$ denote the ring of symmetric functions in infinitely many variables $x_1,x_2,\dotsc$ with integer coefficients.
For each partition $\lambda$, let $s_\lambda$ denote the Schur function corresponding to $\lambda$.
Given $f,g\in \Lambda$, we say that $f\geq g$ if $f-g$ is a non-negative integer combination of Schur functions.

For each representation $V$ of $S_n$, let $\ch V$ denote the Frobenius characteristic of $V$, which is a homogeneous symmetric function of degree $n$ \cite[Section~I.7]{MR3443860}.
Since $\ch V_\lambda = s_\lambda$, $V_\lambda$ occurs in $\Ind_{C_\mu}^{S_n}1$ if and only if
\begin{equation}
  \label{eq:positivity}
  \ch \Ind_{C_\mu}^{S_n}1 \geq s_\lambda.
\end{equation}

Define
$$
f_\mu = \ch \Ind_{C_\mu}^{S_n} 1.
$$
Let $S_\mu=S_{\mu_1}\times \dotsb \times S_{\mu_k}$ be the Young subgroup corresponding to $\mu$.
Let $D_\mu$ denote the subgroup of $S_\mu$ generated by the cycles of $w_\mu$.
Thus $D_\mu$ is a product of cyclic groups of orders $\mu_1,\mu_2,\ldots, \mu_k$.
Using induction in stages,
$$f_\mu=\ch \Ind_{S_\mu}^{S_n}\Ind_{D_\mu}^{S_\mu}\Ind_{C_\mu}^{D_\mu}1.$$
Therefore
\begin{equation}
  \label{eq:fmugeq}
  f_\mu \geq \ch \Ind_{S_\mu}^{S_n}\Ind_{D_\mu}^{S_\mu} 1 = \prod_{i=1}^k f_{(\mu_i)}.
\end{equation}
Swanson's theorem (Theorem~\ref{theorem:swanson}) tells us that $f_{(n)}\geq s_\lambda$ for most partitions $\lambda$ of $n$.
We will use this fact, together with the inequality \eqref{eq:fmugeq}, to establish \eqref{eq:positivity} in most cases.
This will be carried out by using the Littlewood-Richardson rule.
Recall that Littlewood-Richardson coefficients are defined by
\begin{displaymath}
  s_\alpha s_\beta = \sum_\lambda c^\lambda_{\alpha\beta} s_\lambda.
\end{displaymath}
The Littlewood-Richardson rule \cite[Section I.9]{MR3443860} asserts that  $c^\lambda_{\alpha\beta}$ is the number of LR-tableaux of shape $\lambda/\alpha$ and weight $\beta$. Recall that an LR-tableau is a semistandard skew-tableau whose reverse row reading word is a lattice permutation.
\begin{lemma}
  \label{lemma:alphabeta}
  For every partition $\lambda$ of $p+q$, and every partition $\alpha$ of $p$ that is contained in $\lambda$, there exists a partition $\beta$ of $q$ such that $s_\alpha s_\beta\geq s_\lambda$.
  \begin{proof}
    Let $T_{\lambda\alpha}$ denote the skew-tableau obtained by putting $i$ in the $i$th cell of each column of $\lambda/\alpha$.
    Let $\beta$ be the weight of $T_{\lambda\alpha}$.
    For example, if $\lambda = (5,4,4,1)$ and $\alpha = (3,2,1)$ then
    \ytableausetup{boxsize=1.2em}
    $$T = \begin{ytableau}
      *(yellow)  &*(yellow) &*(yellow) & 1 & 1 \\
      *(yellow)&*(yellow)  & 1 & 2\\
      *(yellow) & 1 & 2 & 3\\
      1 \\
    \end{ytableau},$$
    and $\beta$ is $(5,2,1)$.
    Since every $i+1$ occurs below an $i$, $T_{\lambda\alpha}$ is an LR-tableau.
    The Littlewood-Richardson rule implies that $s_\alpha s_\beta\geq s_\lambda$.
  \end{proof}
\end{lemma}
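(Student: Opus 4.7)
The plan is to exhibit, for each $\lambda$ and $\alpha\subseteq\lambda$, an explicit Littlewood-Richardson tableau on the skew shape $\lambda/\alpha$; its weight is then the desired $\beta$. The natural candidate is the ``column-flush'' filling: in each column of $\lambda/\alpha$ write $1,2,3,\dotsc$ from top to bottom. Equivalently, the entry in a cell $(r,c)$ is $r-\alpha'_c$, where $\alpha'_c$ denotes the length of the $c$-th column of $\alpha$.

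I would then verify that this filling is semistandard. Columns are strictly increasing by construction. For rows, as $c$ increases along a fixed row $r$ of $\lambda/\alpha$, the value $r-\alpha'_c$ is weakly increasing because $\alpha'$ is a partition, so rows are weakly increasing.

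Next I would verify the lattice (Yamanouchi) property of the reverse row reading word. The decisive observation is that, in this filling, every entry $i+1$ sits directly below an entry $i$ in the same column, since each column is filled with consecutive integers starting at $1$. Because the $i$ lies in a strictly earlier row, it appears strictly before the corresponding $i+1$ in the reverse row reading word (which reads rows top-to-bottom). This pairing defines an injection from the $(i+1)$-entries appearing in any prefix of the reading word to the $i$-entries appearing in the same prefix. Consequently the count of $i$'s dominates the count of $(i+1)$'s at every prefix, which is exactly the lattice condition.

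Taking $\beta$ to be the weight of this tableau, the Littlewood-Richardson rule then gives $c^\lambda_{\alpha\beta}\geq 1$, which is precisely the statement $s_\alpha s_\beta\geq s_\lambda$. There is no real obstacle: once the column-flush construction is written down, both the semistandardness and the lattice property reduce to one-line pairing arguments.
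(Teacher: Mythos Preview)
Your proposal is correct and is essentially identical to the paper's proof: both use the column-flush filling of $\lambda/\alpha$ (the paper's $T_{\lambda\alpha}$) and take $\beta$ to be its weight, invoking the Littlewood--Richardson rule. You supply more detail than the paper in checking semistandardness and the lattice condition, but the construction and the key observation (each $i+1$ lies directly below an $i$ in the same column) are the same.
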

\begin{remark}
  \label{remark:most-dominant}
  Given $\alpha\subset\lambda$ (i.e., $\alpha$ is contained in $\lambda$), the construction of $\beta$ in the proof of the above lemma has the following property: if $\gamma\vdash q$ also satisfies $s_\alpha s_\gamma\geq s_\lambda$, then $\beta \geq \gamma$ in the dominance order.
  This is because, in a skew tableau of shape $\lambda/\alpha$, the maximum number of cells with entry $i$ cannot exceed the number of columns of length at least $i$.
\end{remark}
\begin{lemma}
  \label{lemma:choose-beta}
  Given integers $p\geq 2$, $q\geq 1$, and a partition $\lambda\vdash (p+q)$ different from $(1^{(p+q)})$, there exists a partition $\beta\vdash q$ such that $f_{(q)}\geq s_\beta$ and $\beta\subset\lambda$.
  \begin{proof}
    \ytableausetup{smalltableaux}
    
    We consider the following cases:
    \subsubsection*{Case 1: $\lambda\supset (q-1,1)$}
    Since $p\geq 2$, the skew shape $\lambda/(q-1,1)$ has at least two cells.
    If at least one of these cells lies in the first row of $\lambda$, then choose $\beta=(q)$.
    If at least one of these cells lies in the first column of $\lambda$, then choose $\beta=(q-2,1,1)$
    If neither of the above happens, then $\lambda/(q-1,1)$ has at least two cells in its second row.
    In this case $q-1\geq 3$.
    Choose $\beta=(q-2,2)$.
    \begin{figure}[h]
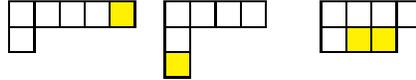

      \begin{displaymath}
        \ydiagram{4,1}*[*(yellow)]{4+1,1+0}\quad \ydiagram{4,1}*[*(yellow)]{4+0,1+0,1} \quad \ydiagram{4,1}*[*(yellow)]{4+0,1+2}
      \end{displaymath}
      \caption{Possible placements of two cells of $\lambda/(q-1,1)$}
      \label{fig:q-1.1}
    \end{figure}
    The possible placements of the cells of $\lambda/(q-1,1)$ are shown in Figure~\ref{fig:q-1.1}.
    In all these cases, Theorem~\ref{theorem:swanson} implies that $f_{(q)}\geq s_\beta$.
    \subsubsection*{Case 2: $\lambda\supset (1^q)$ and $q$ is even}
    Since $\lambda\neq (1^{p+q})$, the skew-shape $\lambda/(1^q)$ must contain at least one cell in the first row.
    Take $\beta = (2,1^{q-2})$.
    By Theorem~\ref{theorem:swanson}, $f_{(q)}\geq s_\beta$, since $q$ is even.
    \subsubsection*{Case 3: $\lambda\supset (2,1^{q-2})$ and $q$ is odd}
    If $\lambda/(2,1^{q-2})$ has a cell in its first row, take $\beta=(3,1^{q-3})$.
    If $\lambda/(2,1^{q-2})$ has a cell in its first column, take $\beta=(1^q)$.
    By Theorem~\ref{theorem:swanson}, $f_{(q)}\geq s_\beta$, since $q$ is odd.
    Otherwise the second column of $\lambda/(2,1^{q-2})$ must have at least two cells in its second column.
    In this case $q\geq 4$.
    Take $\beta=(2,2,1^{q-4})$.
    \begin{figure}[h]
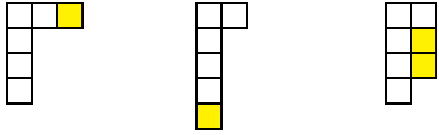

      \begin{displaymath}
        \ydiagram{2,1,1,1}*[*(yellow)]{2+1} \hspace{1.5cm} \ydiagram{2,1,1,1}*[*(yellow)]{2+0,0,0,0,1} \hspace{1.5cm}\ydiagram{2,1,1,1}*[*(yellow)]{2+0,1+1,1+1}
      \end{displaymath}
      \caption{Possible placements of cells of $\lambda/(2,1^{q-2})$.}
      \label{fig:2.1q-2}
    \end{figure}
    The possible placements of the cells of $\lambda/(2,1^{q-2})$ are shown in Figure~\ref{fig:2.1q-2}.
    \subsubsection*{All remaining $\lambda$:}
    Take $\beta$ to be any partition of $q$ that is contained in $\lambda$.
    Since $\lambda$ does not contain any of the exceptions of Theorem~\ref{theorem:swanson}, $f_{(q)}\geq s_\beta$.
  \end{proof}
\end{lemma}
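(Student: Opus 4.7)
My plan is to convert the lemma into a purely combinatorial question. By Theorem~\ref{theorem:swanson} the inequality $f_{(q)}\geq s_\beta$ holds unless $\beta$ is one of three explicit shapes: $(q-1,1)$ in all cases, $(1^q)$ when $q$ is even, and $(2,1^{q-2})$ when $q$ is odd and $q\geq 3$. Hence it suffices to produce, inside every admissible $\lambda$, some partition $\beta\vdash q$ that avoids this short list. Any $\beta\vdash q$ that fits inside $\lambda$ and is not among these three will work, so the lemma reduces to the geometric claim that $\lambda$ always contains such a partition.

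The natural attack is a case split according to which of the three forbidden shapes happens to sit inside $\lambda$, exploiting the $p\geq 2$ extra cells of $\lambda$ to perturb a forbidden candidate into an allowed one. If $\lambda\supset(q-1,1)$, I would examine the skew shape $\lambda/(q-1,1)$, which has at least two cells, and use one of them to promote $(q-1,1)$ to $(q)$ (if a cell lands in row one), to $(q-2,1,1)$ (if one lands in column one), or to $(q-2,2)$ (if the extra cells all lie in row two beyond column two). If instead $\lambda$ contains $(1^q)$ but not $(q-1,1)$ and $q$ is even, then the hypothesis $\lambda\neq(1^{p+q})$ forces $\lambda_1\geq 2$, so $(2,1^{q-2})\subset\lambda$ and it is admissible. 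The case of $q$ odd with $\lambda\supset(2,1^{q-2})$ admits an analogous three-way analysis, yielding one of $(3,1^{q-3})$, $(1^q)$, or $(2,2,1^{q-4})$ depending on where the extra cells fall. Finally, if none of the three forbidden shapes lies inside $\lambda$, then every partition of $q$ contained in $\lambda$ is automatically legal, and at least one exists because $|\lambda|>q$.

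The main obstacle I anticipate is not locating a candidate $\beta$ but checking the boundary-value bookkeeping: verifying that the geometric subcases really exhaust all placements of the extra cells, and that the candidate shape never accidentally collapses back into a forbidden hook for small $q$. In particular, the bound $q\geq 4$ is implicit in shapes like $(q-2,2)$ and $(2,2,1^{q-4})$, so I would have to handle $q\in\{1,2,3\}$ by inspection, which is easy but must not be skipped. Once these boundary checks are in hand, the nontrivial positivity content of $f_{(q)}\geq s_\beta$ is supplied entirely by Theorem~\ref{theorem:swanson}, and the lemma follows without any further Littlewood--Richardson computation.
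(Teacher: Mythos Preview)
Your proposal is correct and follows essentially the same approach as the paper: the same case split on which forbidden shape $(q-1,1)$, $(1^q)$, or $(2,1^{q-2})$ sits inside $\lambda$, the same three-way subcase analyses in Cases~1 and~3 yielding exactly the same replacement partitions $\beta$, and the same catch-all for the residual $\lambda$. Your remark that the small-$q$ boundary values must be checked by inspection is also in line with the paper, which handles them implicitly via the observations ``$q-1\geq 3$'' and ``$q\geq 4$'' in the relevant subcases.
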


\begin{definition}[Persistent partition]
  A partition $\mu$ of $n$ is said to be \emph{persistent} if  every irreducible representation of $S_n$, with the possible exception of the sign representation, admits a non-zero $w_\mu$-invariant vector.
\end{definition}
When $\mu$ is persistent,
\begin{displaymath}
  f_\mu \geq s_\alpha \text{ for every $\alpha\vdash n$, $\alpha\neq (1^n)$.}
\end{displaymath}
The main step in proving the main theorem is to show that $\mu$ is persistent for every integer partition $\mu$ except $\mu=(n)$ for all $n$ and $\mu=(n-2,2)$ for $n$ odd and sufficiently large.
We first consider the case where $\mu$ has two parts.
\begin{lemma}
\label{lemma:pq}
  If $\mu=(p,q)$ where $p\geq q\geq 4$, then $\mu$ is persistent.
\end{lemma}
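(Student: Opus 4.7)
The plan is to use the Frobenius-characteristic inequality \eqref{eq:fmugeq}, which specializes here to $f_{(p,q)} \geq f_{(p)} f_{(q)}$. Fixing any $\lambda \vdash n$ with $\lambda \neq (1^n)$, it therefore suffices to produce partitions $\alpha \vdash p$ and $\beta \vdash q$ such that $f_{(p)} \geq s_\alpha$, $f_{(q)} \geq s_\beta$, and $s_\alpha s_\beta \geq s_\lambda$; chaining these three inequalities yields $f_{(p,q)} \geq s_\lambda$, as required.

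To produce $\beta$, I invoke Lemma~\ref{lemma:choose-beta} with the given $p \geq 2$ and $q \geq 1$; it returns a partition $\beta \vdash q$ contained in $\lambda$ with $f_{(q)} \geq s_\beta$, and in several of its subcases it in fact offers a short menu of acceptable $\beta$. To produce $\alpha$, I use the Littlewood-Richardson symmetry $c^\lambda_{\alpha\beta} = c^\lambda_{\beta\alpha}$ and apply the construction of Lemma~\ref{lemma:alphabeta} with the roles of inner shape and weight reversed: fill the skew shape $\lambda/\beta$ by placing $i$ in the $i$-th cell of each column; the resulting LR-tableau has a weight $\alpha \vdash p$ that satisfies $s_\alpha s_\beta \geq s_\lambda$ by the Littlewood-Richardson rule.

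The main obstacle is to ensure that this $\alpha$ --- or some alternative $\alpha$ coming from a different LR-filling of $\lambda/\beta$, or from a different choice of $\beta$ --- avoids the three Swanson exceptions $(p-1,1)$, $(1^p)$ (for $p$ even), and $(2,1^{p-2})$ (for $p$ odd). Each exceptional $\alpha$ corresponds to a very restrictive configuration of $\lambda/\beta$: a single column of height $p$, a column of height $p-1$ together with one extra cell, or a horizontal strip of columns of heights at most $2$ containing exactly one column of height $2$. My plan is to do a case analysis parallelling that of Lemma~\ref{lemma:choose-beta}, and in each offending subcase either to swap $\beta$ for one of the alternative candidates offered by the lemma or to rearrange the LR-filling of $\lambda/\beta$ so as to realise a non-exceptional weight $\alpha$. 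The hypothesis $p \geq q \geq 4$ provides enough slack --- the forbidden shapes for $\alpha$ have very restricted horizontal or vertical extent, while $\beta$ has enough cells to be repositioned within $\lambda$ --- to guarantee that a valid pair $(\alpha, \beta)$ is always available.
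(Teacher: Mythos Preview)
Your overall strategy---using $f_{(p,q)}\geq f_{(p)}f_{(q)}$, picking $\beta$ via Lemma~\ref{lemma:choose-beta}, then $\alpha$ via the column-filling of Lemma~\ref{lemma:alphabeta}, and finally repairing the three Swanson-exceptional $\alpha$'s by a case analysis---is exactly the paper's approach. However, your final claim that ``a valid pair $(\alpha,\beta)$ is always available'' is false, and this is a genuine gap, not a detail that the case analysis will absorb.

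Take $\lambda=(2,1^{p+q-2})$ with $p$ and $q$ both even. Any $\alpha\vdash p$ with $s_\alpha s_\beta\geq s_\lambda$ must satisfy $\alpha\subset\lambda$, so $\alpha\in\{(2,1^{p-2}),(1^p)\}$; since $p$ is even, only $\alpha=(2,1^{p-2})$ satisfies $f_{(p)}\geq s_\alpha$. Likewise $\beta$ is forced to be $(2,1^{q-2})$. But the skew shape $\lambda/(2,1^{p-2})$ is a single column of length $q$, so the only LR-filling has weight $(1^q)$, and hence $c^{\lambda}_{(2,1^{p-2}),(2,1^{q-2})}=0$. Thus $f_{(p)}f_{(q)}\not\geq s_\lambda$ here: no amount of swapping $\beta$'s or rearranging fillings will work. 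The paper flags this case explicitly and handles it by a separate argument: since $p$ and $q$ are both even, $w_{(p,q)}$ is an even permutation, so the sign character of $S_p\times S_q$ restricts trivially to $C_{(p,q)}$, giving $f_{(p,q)}\geq s_{(1^p)}s_{(1^q)}\geq s_{(2,1^{p+q-2})}$. You need to incorporate this (or an equivalent) escape hatch. The paper also disposes of $p\in\{4,5\}$ by direct computation before starting the case analysis; you should note whether your argument needs such a base case as well.
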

\ytableausetup{boxsize=1.2em}
\begin{proof}
  We will assume that $p\geq 6$.
  For $p=4$ and $5$, direct computation (using Sage~\cite{sagemath}) establishes the lemma.
  By \eqref{eq:fmugeq}, to show that $f_{(p,q)}\geq s_\lambda$ it suffices to find partitions $\alpha$ and $\beta$ of $p$ and $q$ respectively, such that $f_{(p)}\geq s_\alpha$, $f_{(q)}\geq s_\beta$ and $s_\alpha s_\beta\geq s_\lambda$.

  By Lemma~\ref{lemma:choose-beta}, choose $\beta\subset \lambda$ such that $f_{(q)}\geq s_\beta$.
  Using Lemma~\ref{lemma:alphabeta} (with the roles of $\alpha$ and $\beta$ reversed) we may choose $\alpha\vdash p$ such that $s_\alpha s_\beta\geq s_\lambda$.
  If $f_{(p)}\geq s_\alpha$ we are done.
  Otherwise, by Theorem~\ref{theorem:swanson}, one of the following cases must occur:
  \subsubsection*{Case 1: $\alpha=(p-1,1)$}
  \begin{itemize}
  \item Suppose $\lambda_2\geq 2$.
    Replace $\alpha$ by $(p-2,2)$ and choose $\beta$ as in the proof of Lemma~\ref{lemma:alphabeta}.
    If $f_{(q)}\geq s_\beta$ we are done, otherwise by Theorem~\ref{theorem:swanson}, one of the following must occur.
    \begin{itemize}
    \item $\beta = (q-1,1)$.
      Since each column of $T_{\lambda\alpha}$ is filled with integers $1,2,\cdots$ in increasing order, $\lambda/\alpha$ has $q-2$ columns with one cell and one column with two cells.
      Either the first column, the third column, or the $(p-1)$st column can have two cells.

      If the first column of $\lambda/\alpha$ has two cells, then we may change $\beta$ to $(q-2,1,1)$ by constructing a skew-tableau as shown in the example below
      \begin{displaymath}
        \ydiagram{5,2}*[*(yellow) 2]{0,0,1}*[*(yellow) 3]{0,0,0,1}*[*(lime) 1]{5+2,2+3,1}
      \end{displaymath}
      Note that the reverse reading word will be a lattice permutation because $\lambda\supset (p-1,1)$, so there will be at least one cell in the first row of $\lambda/\alpha$.

      If the third column of $\lambda/\alpha$ has two cells, then we may change $\beta$ to $(q-2,2)$ as shown in the example below
      \begin{displaymath}
        \ydiagram{5,2}*[*(yellow) 1]{0,2+1}*[*(yellow) 2]{0,0,2+1}*[*(lime) 1]{5+2,2+3,1}*[*(lime) 2]{0,0,1+1}
      \end{displaymath}
      Note that the reverse reading word will be a lattice permutation because at least one $1$ occurs in the first row (since $\lambda\supset(p-1,1)$) and at least one $1$ occurs in the second row of this tableau (in the third column).

      If the $(p-1)$st column of $\lambda/\alpha$ has two cells, we may change $\alpha$ to $(p-3,3)$ (note that $p\geq 6$, so $\alpha$ is a partition).
      We may then take $\beta=(q-2,2)$ as shown in the example below.
      \begin{displaymath}
        \ydiagram{5,2}*[*(yellow) 1]{5+1}*[*(yellow) 2]{0,5+1}*[*(lime) 1]{6+2,2+3,1}
        \longrightarrow
        \ydiagram{4,3}*[*(yellow) 1]{5+1}*[*(yellow) 2]{0,5+1}*[*(lime) 1]{6+2,3+1,1}*[*(lime) 2]{0,4+1}*[*(lime) 1]{4+1}
      \end{displaymath}
    \item $\beta = (2,1^{q-2})$ with $q$ odd.
      This would mean that $\lambda/\alpha$ has two columns, having $1$ and $q-1$ cells respectively.
      But since $\lambda\supset (p-1,1)$ the $(p-1)$st cell in the first row lies in a column of length one.
      Since $q\geq 4$, the other column of $\lambda/\alpha$ has to be the first one.
      In particular, $\lambda/\alpha$ is a vertical strip, so we can replace beta with $(1^q)$.

    \item $\beta=(1^q)$ with $q$ even.
      In this case, $\lambda/\alpha$ has only one column.
      But $\lambda/\alpha$ contains the cell $(1,p-1)$ whose column cannot contain any other cells.
      It follows that this case cannot occur.
    \end{itemize}
  \item Suppose $\lambda_2=1$ and $\lambda_1\geq p$.
    Replace $\alpha$ by $(p)$ to make $f_{(p)}\geq s_\alpha$, and choose $\beta$ using the proof of Lemma~\ref{lemma:alphabeta}.
    By Theorem~\ref{theorem:swanson} $f_{(q)}\geq s_\beta$ unless one of the following cases occurs.
    \begin{itemize}
    \item $\beta = (q-1,1)$.
      In this case, $\lambda/\alpha$ has $q-1$ columns, of which one column has exactly two cells (and the others have only one cell).
      The column with two cells has to be the first column of $\lambda/\alpha$, since $\lambda$ is a hook.
      Incrementing its entries by $1$ allows us to replace $\beta$ by $(q-2,1,1)$.
    \item $\beta = (2,1^{q-2})$ with $q$ odd.
      Then $\lambda/\alpha$ has two columns, having $1$ and $q-1$ cells respectively.
      Since $\lambda$ is a hook, the column with one cell must lie in the first row and the column with $q-1$ cells has to be the first column.
      Incrementing the entries of the first column by $1$ allows us to replace $\beta$ by $(1^q)$.
    \item $\beta=(1^q)$ with $q$ even.
      All the cells of $\lambda/\alpha$ must lie in the first column.
      We may replace $\alpha$ by $(p-2,1,1)$ and $\beta$ by $(2,1^{q-2})$ as shown in the following example.
      \begin{displaymath}
        \ytableaushort{{}{}{}{}{}{},1,2,3,4}*[*(lime)]{0,1,1,1,1}\longrightarrow
        \ytableaushort{{}{}{}{}11,{},{},2,3}*[*(lime)]{4+2,0,0,1,1}
      \end{displaymath}
    \end{itemize}
  \item Suppose $\lambda_2=1$ and $\lambda_1=p-1$.
    In other words, $\lambda=(p-1,1^{q+1})$.
    Replace $\alpha$ by $(p-2,1,1)$.
    Replace $\beta$ by $(3,1^{q-3})$ if $q>4$ as shown in the example below.
    \begin{displaymath}
      \ytableaushort{{}{}{}{}{}{},1,2,3,4}*[*(lime)]{0,1,1,1,1}\longrightarrow
      \ytableaushort{{}{}{}{}11,{},{},1,2}*[*(lime)]{4+2,0,0,1,1}
    \end{displaymath}
    In case $q=4$, replace $\beta$ by $(2,1,1)$
  \end{itemize}

  \subsubsection*{Case 2: $\alpha = (2,1^{p-2})$ with $p$ odd}
  \begin{itemize}
  \item Suppose $\lambda_2\geq 2$.
    Change $\alpha$ to $(2,2,1^{p-4})$ so that $f_{(p)}\geq s_\alpha$.
    Choose $\beta$ as in the proof of Lemma~\ref{lemma:alphabeta}.
    If $f_{(q)}\geq s_\beta$ we are done.
    Otherwise, by Theorem~\ref{theorem:swanson}, one of the following cases must occur.
    \begin{itemize}
    \item $\beta=(q-1,1)$.
      In this case, $\lambda/\alpha$ has $q-1$ columns, with one column having two cells.
      The column with two cells has to be one of the first three columns.

      If the first or second column of $\lambda/\alpha$ has two cells, increment the entries of those two cells by $1$ to replace $\beta$ by $(q-2,1,1)$.
      Since $q\geq 4$, $\lambda/\alpha$ has at least one cell in the first row, so the resulting skew-tableau is an LR-tableau.

      Suppose the third column of $\lambda/\alpha$ has two cells.
      Since $\lambda\supset (2,1^{p-2})$, the first column of $\lambda/\alpha$ must have exactly one cell.
      Changing the entry of this cell from $1$ to $3$ gives us an LR-tableau of weight $(q-2,1,1)$.

    \item $\beta=(2,1^{q-2})$ with $q$ odd.
      In this case, $\lambda/\alpha$ has exactly two columns, having $1$ and $q-1$ cells respectively.
      Again, since $\lambda\supset (2,1^{p-2})$, $\lambda/\alpha$ has at least one cell in the first column.

      If the first column has $q-1$ cells, then $\lambda/\alpha$ has to be a vertical strip, and we can take $\beta=(1^q)$.

      If the first column has one cell and the second has $q-1$ cells, then the other column, which has at least three cells must be the second column.
      Replace $\alpha$ by $(2^3,1^{p-6})$ and take $\beta=(2,2,1^{q-4})$ as shown in the following example.
      \begin{displaymath}
        \ydiagram{2,2,1,1}*[*(lime) 1]{0,0,1+1,0,0+1}*[*(lime) 2]{0,0,0,1+1}*[*(lime) 3]{0,0,0,0,2}\longrightarrow
        \ydiagram{2,2,2}*[*(lime) 1]{0,0,0,2,0}*[*(lime) 2]{0,0,0,0,2}*[*(lime) 2]{0,0,0,0,2}
      \end{displaymath}
    \item $\beta=(1^q)$ with $q$ even.
      In this case, $\lambda/\alpha$ is a single column, which has to be the first column since $q\geq 4$.
      Replace $\alpha$ by $(1^p)$ and $\beta$ by $(2,1^{q-2})$ as shown in the following example.
      \begin{displaymath}
        \ydiagram{2,2,1,1}*[*(lime) 1]{0,0,0,0,0+1}*[*(lime) 2]{0,0,0,0,0,1}*[*(lime) 3]{0,0,0,0,0,0,1}*[*(lime) 4]{0,0,0,0,0,0,0,1}\longrightarrow
        \ydiagram{1,1,1,1,1,1}*[*(lime) 1]{1+1,0,0,0,0,0,1}*[*(lime) 2]{0,1+1,0,0,0,0,0,0}*[*(lime) 3]{0,1+1,0,0,0,0,0,1}
      \end{displaymath}
    \end{itemize}
  \item Suppose $\lambda_2=1$ (so that $\lambda$ is a hook) and $\lambda_1\geq 3$.
    Replace $\alpha$ by $(3,1^{p-3})$ and choose $\beta$ according to Lemma~\ref{lemma:alphabeta}.
    If $f_{(q)}\geq s_\beta$ we are done.
    Otherwise by Theorem~\ref{theorem:swanson} one of the following must occur.
    \begin{itemize}
    \item $\beta = (q-1,1)$. In this case, $\lambda$ has $q-1$ columns, one having two cells and the other having only one cell.
      Since $\lambda$ is a hook, only the first column of $\lambda/\alpha$ can have two cells.
      Incrementing the entries in these cells by one will allow us to change $\beta$ to $(q-2,1,1)$.
    \item $\beta = (2,1^{q-2})$ with $q$ odd.
      In this case, $\lambda/\alpha$ has two columns.
      Since $\lambda\supset (2,1^{p-2})$, $\lambda/\alpha$ has cells in the first column.
      Since $\lambda$ is a hook, the fourth column must have exactly one cell.
      Incrementing the entries of the first column by $1$ will allow us to replace $\beta$ by $(1^q)$.
      \begin{displaymath}
        \ydiagram{3,1,1,1}*[*(lime) 1]{3+1,0,0,0,0+1}*[*(lime) 2]{0,0,0,0,0,1}*[*(lime) 3]{0,0,0,0,0,0,1}\longrightarrow
        \ydiagram{3,1,1,1}*[*(lime) 1]{3+1,0,0,0,0}*[*(lime) 2]{0,0,0,0,1,0}*[*(lime) 3]{0,0,0,0,0,1}*[*(lime) 4]{0,0,0,0,0,0,1}
      \end{displaymath}
    \item $\beta = (1^q)$, with $q$ even.
      In this case, $\lambda/\alpha$ is contained in the first column.
      We can replace $\alpha$ by $(1^p)$ (since $p$ is odd) and $\beta$ by $(2,1^{q-2})$ as shown in the example below.

      \begin{displaymath}
        \ydiagram{3,1,1,1}*[*(lime) 1]{0,0,0,0,0+1}*[*(lime) 2]{0,0,0,0,0,1}*[*(lime) 3]{0,0,0,0,0,0,1}*[*(lime) 4]{0,0,0,0,0,0,0,1}\longrightarrow
        \ydiagram{3,1,1,1,1,1}*[*(lime) 1]{1+2,0,0,0,0}*[*(lime) 2]{0,0,0,0,0,0,1}*[*(lime) 3]{0,0,0,0,0,0,0,1}
      \end{displaymath}

    \end{itemize}
  \item Suppose $\lambda_2=1$ and $\lambda_1=2$.
    In this case $\lambda$ must be $(2,1^{p+q-2})$. So we may replace $\alpha$ by $(1^p)$ and $\beta$ by $(2,1^{q-2})$ or $(1^q)$ depending on the parity of $q$.
    \begin{displaymath}
      \ydiagram{2,1,1,1,1}*[*(lime) 1]{0,0,0,0,0,0+1}*[*(lime) 2]{0,0,0,0,0,0,1}*[*(lime) 3]{0,0,0,0,0,0,0,1}*[*(lime) 4]{0,0,0,0,0,0,0,0,1}\longrightarrow
      \ydiagram{1,1,1,1,1,1}*[*(lime) 1]{1+1,0,0,0,0,0,0+1}*[*(lime) 2]{0,0,0,0,0,0,0,1}*[*(lime) 3]{0,0,0,0,0,0,0,0,1}*[*(lime) 4]{0,0,0,0,0,0,0,0,1}
      \quad \text{ or }\quad
      \ydiagram{1,1,1,1,1,1}*[*(lime) 1]{1+1,0,0,0,0,0,0}*[*(lime) 2]{0,0,0,0,0,0,1}*[*(lime) 3]{0,0,0,0,0,0,0,1}*[*(lime) 4]{0,0,0,0,0,0,0,0,1}
    \end{displaymath}

  \end{itemize}
  \subsubsection*{Case 3: $\alpha = (1^p)$ with $p$ even}
  Since we have assumed that $\lambda\neq (1^{p+q})$, $\lambda_1>1$.
  Replace $\alpha$ by $(2,1^{p-2})$ and choose $\beta$ using the proof of Lemma~\ref{lemma:alphabeta}.
  If $f_{(q)}\geq s_\beta$ we are done.
  Otherwise, by Theorem~\ref{theorem:swanson}, one of the following cases must occur.
  \begin{itemize}
  \item $\beta = (q-1,1)$.
    Then $\lambda/\alpha$ has $q-1$ columns, one of which has two cells and the others have one cell each.
    The column with two cells has to be one of the first three.
    If the column with two cells is one of the first two columns, then incrementing its entries by $1$ will give an LR-tableau of shape $\lambda/\alpha$ and weight $(q-2,1,1)$.
    If the column with two cells is the third, then changing the entry in the first column (which exists since $\lambda\supset (1^p)$) from $1$ to $2$, we get an LR-tableau of shape $\lambda/\alpha$ and weight $(q-2,2)$.
  \item $\beta=(2,1^{q-2})$ with $q$ odd.
    In this case, $\lambda/\alpha$ has exactly two columns having $q-1$ and $1$ cells respectively.

    If the first column of $\lambda/\alpha$ has $q-1$ cells, then $\lambda/\alpha$ will be a vertical strip, so we can change $\beta$ to $(1^q)$.

    If the second column of $\lambda/\alpha$ has $q-1$ cells, replace $\alpha$ by $(2,2,1^{p-4})$ and $\beta$ by $(2,2,1^{q-4})$ as shown in the following example.
    \begin{displaymath}
      \ydiagram{2,1,1,1}*[*(lime) 1]{0,1+1,0,0,0+1}*[*(lime) 2]{0,0,1+1}*[*(lime) 3]{0,0,0,1+1}*[*(lime) 4]{0,0,0,0,1+1}\longrightarrow
      \ydiagram{2,2,1,1}*[*(lime) 1]{0,0,1+1,1,0}*[*(lime) 2]{0,0,0,1+1,1}*[*(lime) 3]{0,0,0,0,1+1}*[*(lime) 4]{0,0,0,0,1+1}
    \end{displaymath}
  \item $\beta=(1^q)$ with $q$ even.
    In this case, $\lambda/\alpha$ has only one column, which has to be the first column.
    So $\lambda=(2,1^{p+q-2})$.
    In this case it will not be true that $f_{(p)}f_{(q)}\geq s_\lambda$.
    Instead, we will show that $f_{(p,q)}\geq s_{(2,1^{p+q-2})}$ using a different strategy.

    The restriction of the sign representation of $S_p\times S_q$ to $C_{(p,q)}$ is the trivial representation ($w_{(p,q)}$ is an even permutation, since both $p$ and $q$ are even).
    Therefore, $\Ind_{C(p,q)}^{S_p\times S_q} 1$ contains the sign representation of $S_p\times S_q$.
    It follows that 
    \begin{displaymath}
      f_{(p,q)} = \ch \Ind_{C_{(p,q)}}^{S_{p+q}}1 \geq s_{(1^p)}s_{(1^q)} \geq s_{(2,1^{p+q-2})}.
    \end{displaymath}
  \end{itemize}
  This completes the proof of the lemma.
\end{proof}
\begin{lemma}
  \label{lemma:alphabeta1}
  We have
  \begin{enumerate}
  \item The partition $(p,1)$ is persistent except when $p=3$.
  \item The partition $(p,2)$ is persistent when p is even.
    When p is odd, $f_{(p,2)}\geq s_\lambda$ for all partitions $\lambda$ of $p+2$, except $\lambda=(2,2,1^{p-2})$ and $(1^{p+2})$.
  \item The partition $(p,3)$ is persistent if $p\geq 6$.
  \end{enumerate}
\end{lemma}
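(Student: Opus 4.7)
The strategy for all three parts follows the template of Lemma~\ref{lemma:pq}: use \eqref{eq:fmugeq} to write $f_{(p,q)}\geq f_{(p)}f_{(q)}$, and for each target $\lambda\vdash p+q$ with $\lambda\neq(1^{p+q})$, locate $\alpha\vdash p$ and $\beta\vdash q$ satisfying $f_{(p)}\geq s_\alpha$, $f_{(q)}\geq s_\beta$, and $s_\alpha s_\beta\geq s_\lambda$, where Theorem~\ref{theorem:swanson} controls the admissible $\alpha$ and $\beta$. A handful of small $p$ are checked directly in Sage.

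For part~(1), $q=1$: Pieri's rule gives $s_\alpha s_{(1)}=\sum s_\nu$ summed over $\nu$ obtained from $\alpha$ by adding a single box, so it suffices to show that every $\lambda\vdash p+1$ with $\lambda\neq(1^{p+1})$ possesses a removable corner whose removal yields a non-exceptional $\alpha$. A short case analysis, splitting on whether $\lambda$ has a removable corner in row~1 and on the value of $\lambda_2$, succeeds in every case except $p=3$, $\lambda=(2,2)$, whose unique removable corner gives $\alpha=(2,1)$, simultaneously both of Swanson's exceptional partitions at $n=3$. This is precisely why $(3,1)$ is the only non-persistent $(p,1)$.

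For part~(2), $q=2$: since Swanson excludes $(1,1)$, only $\beta=(2)$ is admissible, and by Pieri $s_\alpha s_{(2)}\geq s_\lambda$ iff $\lambda/\alpha$ is a horizontal $2$-strip. A case analysis on $\lambda\vdash p+2$ analogous to Lemma~\ref{lemma:pq} produces a non-exceptional $\alpha\vdash p$ except for the short list of shapes where every horizontal $2$-strip removal yields a Swanson-exceptional $\alpha$ (or no valid $\alpha$ at all)---namely $\lambda\in\{(2,1^p),(2,2,1^{p-2}),(1^{p+2})\}$. When $p$ is even, $w_{(p,2)}$ is an even permutation, so the sign character of $S_p\times S_2$ restricts to the trivial character of $C_{(p,2)}$; exactly as in the final bullet of Case~3 of Lemma~\ref{lemma:pq} this yields $f_{(p,2)}\geq s_{(1^p)}s_{(1^2)}$, and by the Pieri rule for vertical strips this dominates each of the three remaining shapes, completing persistence. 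When $p$ is odd the sign trick fails: $w_{(p,2)}$ is odd, so $V_{(1^{p+2})}$ is genuinely missing by sign parity, and the remaining exception $V_{(2,2,1^{p-2})}$ is confirmed absent by a direct character computation, summing $\chi_{(2,2,1^{p-2})}$ over the cycle types of the powers of $w_{(p,2)}$ and verifying that $\langle \chi_{(2,2,1^{p-2})}|_{C_{(p,2)}},1\rangle=0$.

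For part~(3), $q=3$: Swanson's only exception at $n=3$ is $(2,1)$, so both $\beta=(3)$ and $\beta=(1^3)$ are admissible; by Pieri, $s_\alpha s_{(3)}\geq s_\lambda$ resp.\ $s_\alpha s_{(1^3)}\geq s_\lambda$ corresponds to $\lambda/\alpha$ being a horizontal resp.\ vertical $3$-strip. For each $\lambda\vdash p+3$ other than $(1^{p+3})$, one starts from the $\alpha$ supplied by Lemma~\ref{lemma:alphabeta}; whenever the accompanying weight $\beta$ equals $(2,1)$ or the chosen $\alpha$ lands in Swanson's exception list $\{(p-1,1),(1^p),(2,1^{p-2})\}$, one performs substitutions analogous to those in Lemma~\ref{lemma:pq}, swapping $\alpha$ for a nearby non-exceptional partition such as $(p-2,2)$, $(p-3,3)$, $(3,1^{p-3})$, $(2^2,1^{p-4})$, or $(1^p)$, and adjusting $\beta$ so that the new $\lambda/\alpha$ is again a horizontal or vertical $3$-strip. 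The hypothesis $p\geq 6$ is what keeps all these substitute partitions valid. The main obstacle throughout---and in all three parts---is the hook-shaped $\lambda$: there both $\lambda$ and its candidate $\alpha$'s lie in the thin strip populated by Swanson's exceptions, so each swap-move must be checked carefully, but having both the horizontal and vertical $3$-strip options affords the required flexibility.
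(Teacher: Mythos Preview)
Your proposal is correct and follows essentially the same route as the paper: reduce via \eqref{eq:fmugeq} to finding admissible $\alpha,\beta$ with $s_\alpha s_\beta\geq s_\lambda$, use Lemmas~\ref{lemma:alphabeta} and~\ref{lemma:choose-beta} together with Theorem~\ref{theorem:swanson} to locate them, and handle the residual hook-type $\lambda$'s by explicit substitution or the sign trick, with small $p$ done in Sage.

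One point is worth flagging. For the \emph{non-existence} claim in part~(2) (that $f_{(p,2)}\not\geq s_{(2,2,1^{p-2})}$ for odd $p$), you appeal to ``a direct character computation'' over the powers of $w_{(p,2)}$, but you do not indicate how this is carried out uniformly in $p$. The paper instead observes that for $p$ odd one has $\gcd(p,2)=1$, hence $C_{(p,2)}=D_{(p,2)}$ and therefore $f_{(p,2)}=f_{(p)}f_{(2)}$ \emph{exactly}; the Littlewood--Richardson argument then becomes an if-and-only-if, and one simply checks that the three candidate $\alpha\subset(2,2,1^{p-2})$ all fail. This is cleaner and requires no character-table input; you may want to adopt it. The same remark applies to your explanation of why $(3,1)$ genuinely fails at $\lambda=(2,2)$: the failure of the LR search only proves $f_{(3)}f_{(1)}\not\geq s_{(2,2)}$, and one needs $C_{(3,1)}=D_{(3,1)}$ to promote this to $f_{(3,1)}\not\geq s_{(2,2)}$.
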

\begin{proof}
  In order to prove (1), assume $p>3$ (for $p\leq 3$ use direct calculation) and choose $\alpha\vdash p$ such that $s_\alpha s_\beta\geq s_\lambda$ using Lemma~\ref{lemma:alphabeta} with the roles of $\alpha$ and $\beta$ interchanged.
  If $f_{(p)}\geq s_\alpha$ then we are done since $f_{(1)}\geq(1)$.
  Otherwise one of the following cases occurs.

  \subsubsection*{Case 1: $\alpha=(p-1,1)$}
  Given $p$, there are exactly three possibilities for $\lambda$, depending on where the single cell of $\lambda/\alpha$ is added to $\alpha$.
  These possibilities are shown in Figure~\ref{figg:p1-1}.
  In this figure the cells of $\alpha=(p-1,1)$ are shown in white, while the cells of $\lambda/\alpha$ are shown in green.
  \begin{figure}
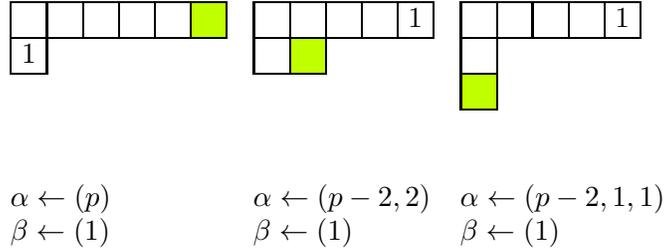

  \begin{displaymath}
      \begin{array}{llll}
        \ytableaushort{{}{}{}{}{}{},1}*[*(lime)]{5+1,0,0} & \ytableaushort{{}{}{}{}1,{}}*[*(lime)]{0,1+1,0,0} & \ytableaushort{{}{}{}{}1,{}}*[*(lime)]{0,0,1,0,0}\\
        \alpha\gets(p)& \alpha\gets(p-2,2)& \alpha\gets (p-2,1,1)\\
        \beta\gets(1)& \beta\gets(1)& \beta\gets(1)
      \end{array}
    \end{displaymath}
    \caption{Cases for $\lambda\supset (p-1,1)$}
    \label{figg:p1-1}
  \end{figure}
  \subsubsection*{Case 2: $\alpha = (2,1^{p-2})$, with $p$ odd}
  Once again, given $p$, there are finitely many possibilities for $\lambda$, as shown in Figure~\ref{figg:21p-2} along with the replacement for $\alpha$.
  \begin{figure}[h]
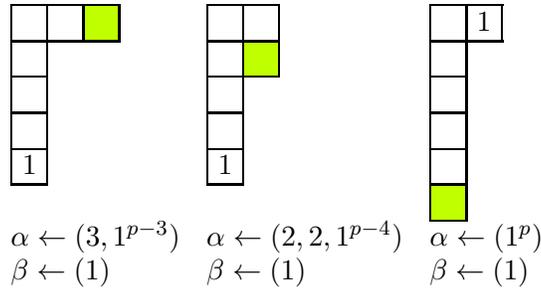

    \begin{displaymath}
      \begin{array}{llll}
        \ytableaushort{{}{},{},{},{},1}*[*(lime)]{2+1,0,0} & \ytableaushort{{}{},{},{},{},1}*[*(lime)]{0,1+1,0,0} & \ytableaushort{{}1,{},{},{},{}}*[*(lime)]{0,0,0,0,0,1}\\
        \alpha\gets(3,1^{p-3})& \alpha\gets(2,2,1^{p-4})& \alpha\gets (1^p)\\
        \beta\gets(1)& \beta\gets(1)& \beta\gets(1)

      \end{array}
    \end{displaymath}
    \caption{Cases for $\lambda\supset (2,1^{p-2})$}
    \label{figg:21p-2}
  \end{figure}     
  \subsubsection*{Case 3: $\alpha=(1^p)$ with $p$ even} 
  As before, given $p$, there is only one possibility for $\lambda\neq (1^{p+1})$ which contains $(1^p)$, as shown in Figure~\ref{figg:1p} along with the replacements for $\alpha$ and $\beta$.

  \begin{figure}[t]
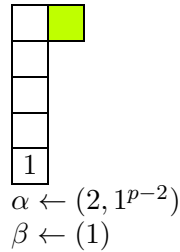

    \begin{displaymath}
      \begin{array}{llll}
        \ytableaushort{{}{},{},{},{},1}*[*(lime)]{1+1,0,0}  \\
        \alpha\gets (2,1^{p-2})\\
        \beta\gets (1)
      \end{array}
    \end{displaymath}
    \caption{Cases for $\lambda\supset (1^{p})$}
    \label{figg:1p}
  \end{figure}
  In order to prove (2), 
  choose $\beta\vdash 2$ such that $\beta \subset \lambda$ and $f_{(2)}\geq s_\beta$ using Lemma~\ref{lemma:choose-beta}.
  Choose $\alpha\vdash p$ such that $s_\alpha s_\beta\geq s_\lambda$ using Lemma~\ref{lemma:alphabeta} with the roles of $\alpha$ and $\beta$ interchanged.
  If $f_{(p)}\geq s_\alpha$ then we are done.
  Otherwise, one of the following cases occurs.
  \subsubsection*{Case 1: $\alpha=(p-1,1)$}
  Given $p$, there are finitely many possibilities for $\lambda$, depending on how the two cells of $\lambda/\alpha$ are placed.
  These possibilities are shown in Figure~\ref{figgg:p1-1}, the coloured cells having the same significance as before.
  \begin{figure}[h]
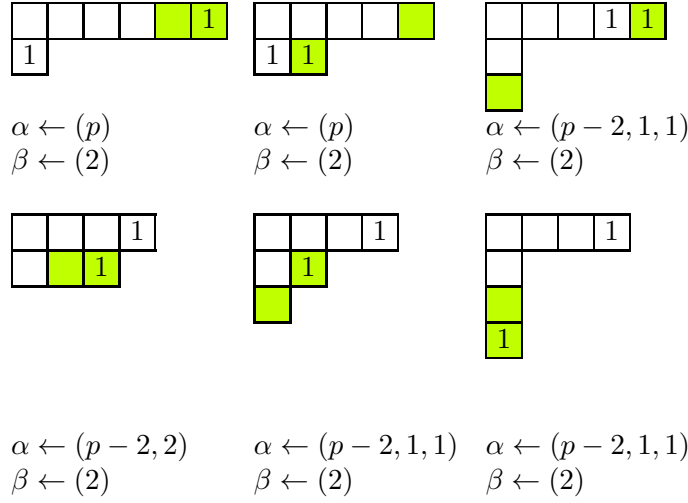

    \begin{displaymath}
      \begin{array}{lll}
        \ytableaushort{{}{}{}{}{}1,1}*[*(lime)]{4+2,0,0} & \ytableaushort{{}{}{}{}{},11}*[*(lime)]{4+1,1+1,0} & \ytableaushort{{}{}{}11,{}}*[*(lime)]{4+1,0,1} \\
        \alpha\gets(p)&\alpha\gets(p)&\alpha\gets(p-2,1,1)\\ 
        \beta\gets(2)& \beta\gets(2)& \beta\gets(2)\\
                                                         & & \\
        \ytableaushort{{}{}{}1,{}{}1}*[*(lime)]{0,1+2,0,0} & \ytableaushort{{}{}{}1,{}1}*[*(lime)]{0,1+1,1,0,0} & \ytableaushort{{}{}{}1,{},{},1}*[*(lime)]{0,0,1,1,0,0}\\
        \alpha\gets(p-2,2)&\alpha\gets(p-2,1,1)&\alpha\gets(p-2,1,1)\\
        \beta\gets(2)& \beta\gets(2)& \beta\gets(2)
      \end{array}
    \end{displaymath}
    \caption{Cases for $\lambda\supset (p-1,1)$}
    \label{figgg:p1-1}
  \end{figure}
  \subsubsection*{Case 2: $\alpha = (2,1^{p-2})$, with $p$ odd}
  Once again, given $p$, there are finitely many possibilities for $\lambda$, as shown in Figure~\ref{figgg:2p1-2} along with the replacements for $\alpha$ and $\beta$ except for the fifth diagram, where $\lambda=(2,2,1^{p-2})$.

  When $\lambda=(2,2,1^{p-2})$, it turns out that $f_{(p,2)}\ngeq s_\lambda$.
  Indeed, since $p$ is odd, $C_{(p,2)} = D_{(p,2)}$ and so $f_{(p,2)}=f_{(p)}f_{(2)}$.
  We will show that $f_{(p)}f_{(2)}\not\geq f_\lambda$ by contradiction.

  If $f_{(p)}f_{(2)}\geq f_\lambda$, there exist $\alpha\vdash p$ and $\beta\vdash 2$ such that $f_{(p)}\geq s_\alpha$, $f_{(2)}\geq s_\beta$ and $s_\alpha s_\beta\geq s_\lambda$.
  The latter condition implies that $\alpha\subset\lambda$.
  This gives us three possibilities for $\alpha$, namely $(2^2,1^{p-4})$, $(2,1^{p-2})$ and $(1^p)$.
  Since $p$ is odd, $f_{(p)}\not\geq (2,1^{p-2})$.
  But for the remaining two possibilities, $\lambda/\alpha$ is a single column, forcing $\beta=(1,1)$, but $f_{(2)}\not\geq s_\beta$, contradicting our assumptions on $\alpha$ and $\beta$.

  Hence, we have $f_{(p,2)}\ngeq s_{(2,2,1^{p-2})}$, if $p$ is odd.
  \begin{figure}[h]
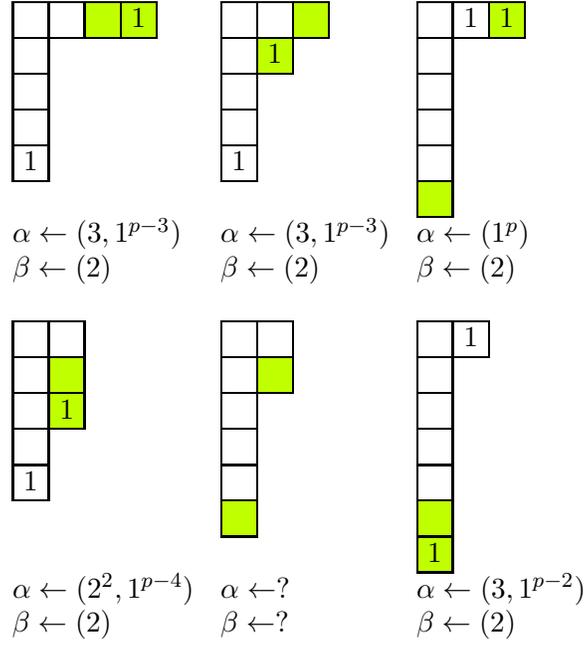

    \begin{displaymath}
      \begin{array}{lll}
        \ytableaushort{{}{}{}1,{},{},{},1}*[*(lime)]{2+2,0,0} 
        & \ytableaushort{{}{},{}1,{},{},1}*[*(lime)]{2+1,1+1,0,0}  
        & \ytableaushort{{}11,{},{},{},{},{}}*[*(lime)]{2+1,0,0,0,0,1}\\
        \alpha\gets(3,1^{p-3})
        & \alpha\gets(3,1^{p-3})
        & \alpha\gets(1^{p})\\
        \beta\gets(2)
        & \beta\gets(2)
        & \beta\gets(2)\\
        & & \\
        \ytableaushort{{}{},{},{}1,{},1}*[*(lime)]{0,1+1,1+1,0,0}
        & \ytableaushort{{}{},{}{},{},{},{},{}}*[*(lime)]{0,1+1,0,0,0,1} 
        & \ytableaushort{{}1,{},{},{},{},{},1}*[*(lime)]{0,0,0,0,0,1,1}\\
        \alpha\gets(2^2,1^{p-4})
        & \alpha\gets?
        & \alpha\gets(3,1^{p-2})\\
        \beta\gets(2)
        & \beta\gets?
        & \beta\gets(2) 
      \end{array}
    \end{displaymath}
    \caption{Cases for $\lambda\supset (2,1^{p-2})$}
    \label{figgg:2p1-2}
  \end{figure} 
  \subsubsection*{Case 3: $\alpha=(1^p)$, with $p$ even} 
  Similarly, for each $\alpha$ of this type, there are only finitely many possibilities for $\lambda$ which contains $(1^p)$, as shown in Figure~\ref{figgg:1p} along with the replacements for $\alpha$ and $\beta$, except for the third diagram (where $\lambda=(2,1^p)$) in which case we have to use a different strategy.
  \ytableausetup{boxsize=1.2em}
  \begin{figure}
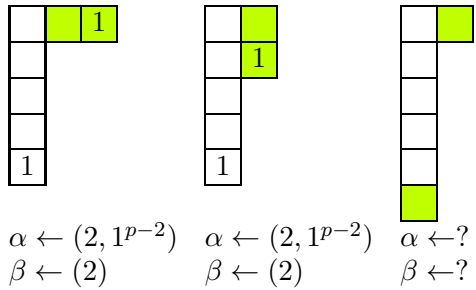

    \begin{displaymath}
      \begin{array}{llll}
        \ytableaushort{{}{}1,{},{},{},1}*[*(lime)]{1+2,0,0} & \ytableaushort{{}{},{}1,{},{},1}*[*(lime)]{1+1,1+1,0} & \ytableaushort{{}{},{},{},{},{}}*[*(lime)]{1+1,0,0,0,0,1} \\
        \alpha\gets (2,1^{p-2})& \alpha\gets (2,1^{p-2})& \alpha\gets?\\
        \beta\gets(2) & \beta\gets(2)& \beta\gets?
      \end{array}
    \end{displaymath}
    \caption{Cases for $\lambda\supset (1^{p})$}
    \label{figgg:1p}
  \end{figure}
  The restriction of the sign representation of $S_p\times S_2$ to $C_{(p,2)}$ is the trivial representation (since $w_{(p,2)}$ is an even permutation).
  Therefore, $\Ind_{C(p,2)}^{S_p\times S_2} 1$ contains the sign representation of $S_p\times S_2$.
  It follows that 
  \begin{displaymath}
    f_{(p,2)} = \ch \Ind_{C_{(p,2)}}^{S_{p+2}} \geq s_{(1^p)}s_{(1^2)} \geq s_{(2,2,1^{p-2})}
  \end{displaymath}
  Hence $f_{(p,2)}\geq s_{(2,2,1^{p-2})}$.

  In order to prove (3), 
  choose $\beta\vdash 3$ such that $\beta \subset \lambda$ and $f_{(3)}\geq s_\beta$ using Lemma~\ref{lemma:choose-beta}.
  Choose $\alpha\vdash p$ such that $s_\alpha s_\beta\geq s_\lambda$ using Lemma~\ref{lemma:alphabeta} with the roles of $\alpha$ and $\beta$ interchanged.
  If $f_{(p)}\geq s_\alpha$ then we are done.
  Otherwise, one of the following cases occurs.
  \subsubsection*{Case 1: $\alpha=(p-1,1)$}
  Given $p$, there are finitely many possibilities for $\lambda$, depending on how the three cells of $\lambda/\alpha$ are placed.
  These possibilities are shown in Figure~\ref{fig:cases}.  
  The empty cells denote the replacement for $\alpha$ and the cells filled by entries in $\{1,2,3\}$ give a skew tableau whose weight is the replacement for $\beta$ such that $f_{(p)}\geq s_\alpha$, $f_{(q)}\geq s_\beta$ and $s_\alpha s_\beta\geq s_\lambda$.

  For example, in the left-most figure of the first row, $\lambda = (p+2,1)$.
  In this case $\alpha$ is replaced by $(p)$ and $\beta$ is replaced by $(3)$.
  \begin{figure}[h]
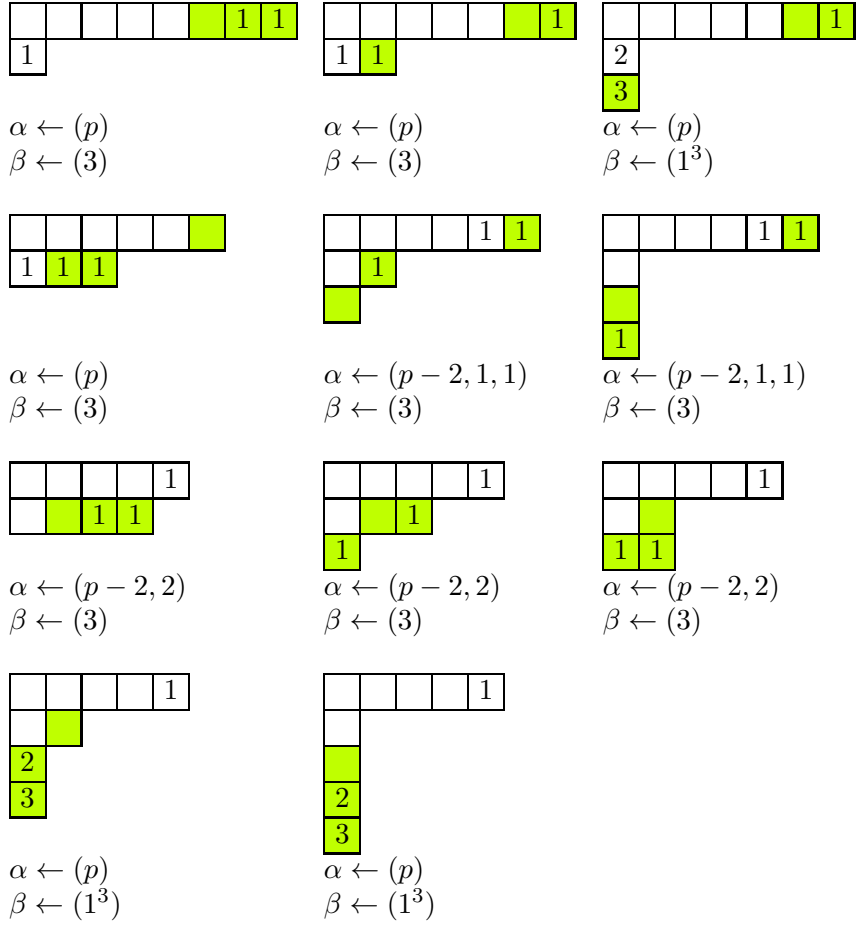

    \begin{displaymath}
      \begin{array}{lll}
        \ytableaushort{{}{}{}{}{}{}11,1}*[*(lime)]{5+3} & \ytableaushort{{}{}{}{}{}{}1,11}*[*(lime)]{5+2,1+1}& \ytableaushort{{}{}{}{}{}{}1,2,3}*[*(lime)]{5+2,0,1}\\
        \alpha\gets (p)& \alpha\gets (p)& \alpha\gets (p)\\
        \beta\gets (3) & \beta\gets (3) & \beta\gets (1^3)\\
                                                        & & \\
        \ytableaushort{{}{}{}{}{}{},111}*[*(lime)]{5+1,1+2} & \ytableaushort{{}{}{}{}11,{}1}*[*(lime)]{5+1,1+1,1} & \ytableaushort{{}{}{}{}11,{},{},1}*[*(lime)]{5+1,0,1,1}\\
        \alpha\gets (p)& \alpha\gets (p-2,1,1)& \alpha\gets (p-2,1,1)\\
        \beta\gets (3) & \beta\gets (3) & \beta\gets (3)\\
                                                        & & \\
        \ytableaushort{{}{}{}{}1,{}{}11}*[*(lime)]{0,1+3} & \ytableaushort{{}{}{}{}1,{}{}1,1}*[*(lime)]{0,1+2,1} & \ytableaushort{{}{}{}{}1,{}{},11}*[*(lime)]{0,1+1,2}\\
        \alpha\gets (p-2,2)& \alpha\gets (p-2,2)& \alpha\gets (p-2,2)\\
        \beta\gets (3) & \beta\gets (3) & \beta\gets (3)\\
                                                        & & \\ 
        \ytableaushort{{}{}{}{}1,{}{},2,3}*[*(lime)]{0,1+1,1,1} & \ytableaushort{{}{}{}{}1,{},{},2,3}*[*(lime)]{0,0,1,1,1} & \\
        \alpha\gets (p)& \alpha\gets (p)\\
        \beta\gets (1^3) & \beta\gets (1^3) \\
      \end{array}
    \end{displaymath}
    \caption{Cases for $\lambda\supset (p-1,1)$}
    \label{fig:cases}
  \end{figure}
  \subsubsection*{Case 2: $\alpha = (2,1^{p-2})$, with $p$ odd}
  Once again, for each $\alpha$ of this type, there are finitely many possibilities for $\lambda$, as shown in Figure~\ref{fig:21p-2} along with the replacements for $\alpha$ and $\beta$.
  \begin{figure}[h]
    \begin{displaymath}
      \begin{array}{lllll}
        \ytableaushort{{}{}{}11,{},{},{},1}*[*(lime)]{2+3,0,0}
        & \ytableaushort{{}{}11,{},{},{},1}*[*(lime)]{2+2,1+1,0}
        & \ytableaushort{{}111,{},{},{},{}}*[*(lime)]{2+2,0,0,0,1}
        & \ytableaushort{{}{}1,{}{},{}1,{},1}*[*(lime)]{2+1,1+1,1+1,0,0}
        &   \ytableaushort{{}{}{},{}1,{},{},2,3}*[*(lime)]{2+1,1+1,0,0,0,1}\\
        \alpha\gets (3,1^{p-3})& \alpha\gets (2^2,1^{p-4})& \alpha\gets (1^p)& \alpha\gets (2^2,1^{p-4})& \alpha\gets (3,1^{p-3}) \\
        \beta\gets (3) & \beta\gets (3) & \beta\gets (3)& \beta\gets (3)& \beta\gets (1^3)\\
        & & & & \\
        \ytableaushort{{}{}{},{},{},{},1,2,3}*[*(lime)]{2+1,0,0,0,0,1,1} & \ytableaushort{{}{},{}{},{}1,{}2,3}*[*(lime)]{0,1+1,1+1,1+1} &
                                                                                                                                          \ytableaushort{{}{},{}{},{}1,{},2,3}*[*(lime)]{0,1+1,1+1,0,0,1}
        & \ytableaushort{{}{},{}{},{},{},1,2,3}*[*(lime)]{0,1+1,0,0,0,1,1} 
        & \ytableaushort{{}1,{},{},{},{},{},2,3}*[*(lime)]{0,0,0,0,0,1,1,1}  \\
        \alpha\gets (3,1^{p-3})& \alpha\gets (2^2,1^{p-4})& \alpha\gets (2^2,1^{p-4})& \alpha\gets (2^2,1^{p-4})& \alpha\gets (1^{p}) \\
        \beta\gets (1^3) & \beta\gets (1^3) & \beta\gets (1^3)& \beta\gets (1^3)& \beta\gets (1^3)\\
      \end{array}
    \end{displaymath}
    \caption{Cases for $\lambda\supset (2,1^{p-2})$}
    \label{fig:21p-2}
  \end{figure}
  \subsubsection*{Case 3: $\alpha=(1^p)$ with $p$ even} 
  Similarly, for each $\alpha$ of this type, there are only finitely many possibilities for $\lambda$ which contains $(1^p)$, as shown in Figure~\ref{fig:1p} along with the replacements for $\alpha$ and $\beta$.
  \ytableausetup{boxsize=1.2em}
  \begin{figure}[h]
    \begin{displaymath}
      \begin{array}{lll}
        \ytableaushort{{}{}11,{},{},{},{},1}*[*(lime)]{1+3,0,0} & \ytableaushort{{}{}1,{}1,{},{},{},1}*[*(lime)]{1+2,1+1,0} & \ytableaushort{{}{}1,{},{},{},{},2,3}*[*(lime)]{1+2,0,0,0,0,0,1} \\ 
        \alpha\gets (2,1^{p-2})& \alpha\gets (2,1^{p-2})& \alpha\gets (2,1^{p-2})\\
        \beta\gets (3)& \beta\gets (3)& \beta\gets (1^3)\\
                                                                & & \\ \ytableaushort{{}{},{}1,{}2,{},{},3}*[*(lime)]{1+1,1+1,1+1} & \ytableaushort{{}{},{}1,{},{},{},2,3}*[*(lime)]{1+1,1+1,0,0,0,0,1}  & \ytableaushort{{}{},{},{},{},{},1,2,3}*[*(lime)]{1+1,0,0,0,0,0,1,1}\\
        \alpha\gets (2,1^{p-2})& \alpha\gets (2,1^{p-2})& \alpha\gets (2,1^{p-2}) \\
        \beta\gets(1^3)& \beta\gets(1^3)& \beta\gets(1^3)
      \end{array}
    \end{displaymath}
    \caption{Cases for $\lambda\supset (1^p)$}
    \label{fig:1p}
  \end{figure}
\end{proof}
\pagebreak
\begin{lemma}
  \label{lemma:simple}
  The partitions $(1,1)$, $(2,2)$ and $(3,3)$ are persistent.
\end{lemma}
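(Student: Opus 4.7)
The plan is to verify these three cases by a direct character computation. The partition $(1,1)$ is persistent trivially, since $w_{(1,1)}$ is the identity of $S_2$ and fixes every vector of every representation. For $\mu=(2,2)$ and $\mu=(3,3)$, I would use the standard identity
$$\dim V_\lambda^{w_\mu} \;=\; \frac{1}{|C_\mu|}\sum_{k=0}^{|C_\mu|-1}\chi_\lambda\bigl(w_\mu^k\bigr)$$
and verify that it is strictly positive for every $\lambda\vdash n$ (with $n=4$ and $n=6$ respectively). Here $|C_{(2,2)}|=2$, so the check reduces to positivity of $\chi_\lambda(e)+\chi_\lambda(w_{(2,2)})$ for every $\lambda\vdash 4$; and since $w_{(3,3)}^2$ again has cycle type $(3,3)$, the case $\mu=(3,3)$ reduces to positivity of $\chi_\lambda(e)+2\chi_\lambda(w_{(3,3)})$ for every $\lambda\vdash 6$.

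The required character values can be read from the character tables of $S_4$ and $S_6$, or equivalently obtained from the Schur expansions of the power sums $p_2^2$ and $p_3^2$ via the Murnaghan--Nakayama rule. In both cases $w_\mu$ is an even permutation, so positivity will in fact hold even for $\lambda=(1^n)$; persistence, which only requires the non-sign representations, follows \emph{a fortiori}. The whole argument is a short finite verification presenting no conceptual obstacle; in the spirit of the treatment of small values of $p$ in Lemma~\ref{lemma:pq}, it may equally well be delegated to the Sage code linked in the introduction.
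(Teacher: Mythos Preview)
Your proposal is correct and is essentially the same as the paper's proof: both are direct finite computations of the multiplicity of each $V_\lambda$ in $\Ind_{C_\mu}^{S_n}1$. The paper simply writes out the Schur expansion of $f_\mu$ for $\mu\in\{(1,1),(2,2),(3,3)\}$ and observes that every $s_\lambda$ with $\lambda\neq(1^n)$ appears; by Frobenius reciprocity your character-averaging formula computes exactly those same coefficients.
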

\begin{proof}
  We compute the following by hand or using SageMath \cite{sagemath}.
  \begin{itemize}
  \item $f_{(1,1)}=s_{(2)}+s_{(1,1)}$
  \item $f_{(2,2)}=s_{(4)}+s_{(3,1)}+2s_{(2,2)}+s_{(2,1,1)}+s_{(1,1,1,1)} $
  \item $f_{(3,3)}=s_{(6)}+s_{(5,1)}+3s_{(4,2)}+4s_{(4,1,1)}+s_{(3,3)}+4s_{(3,2,1)}+4s_{(3,1,1,1)}+s_{(2,2,2)}+3s_{(2,2,1,1)}+s_{(2,1,1,1,1)}+s_{(1,1,1,1,1,1)}$
  \end{itemize}

  Evidently, the partitions $(1,1)$, $(2,2)$ and $(3,3)$ are persistent.
\end{proof}

Now consider partitions with three or more parts.
\begin{lemma}
  \label{lemma:power}
  A partition $\mu=(\mu_1,\dots,\mu_k)\vdash n$ is persistent if the partition $\tilde\mu$ obtained by removing a part $\mu_i$ from $\mu$ is persistent and $n-\mu_i\geq 4$.
\end{lemma}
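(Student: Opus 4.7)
The plan is to leverage the hypothesis through the step-wise inequality
\[f_\mu \;\geq\; f_{\tilde\mu}\cdot f_{(\mu_i)},\]
which follows from induction in stages applied to the chain $C_\mu\subset C_{\tilde\mu}\times C_{(\mu_i)}\subset S_{n-\mu_i}\times S_{\mu_i}\subset S_n$. For a fixed $\lambda\vdash n$ with $\lambda\neq(1^n)$, it then suffices to produce $\alpha\vdash n-\mu_i$ with $\alpha\neq(1^{n-\mu_i})$ and $\beta\vdash\mu_i$ with $f_{(\mu_i)}\geq s_\beta$ satisfying $s_\alpha s_\beta\geq s_\lambda$; the persistence of $\tilde\mu$ then supplies $f_{\tilde\mu}\geq s_\alpha$ and closes the chain $f_\mu\geq f_{\tilde\mu}f_{(\mu_i)}\geq s_\alpha s_\beta\geq s_\lambda$.

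The production of $\alpha$ and $\beta$ mirrors the arguments of Lemma~\ref{lemma:pq} and Lemma~\ref{lemma:alphabeta1}. Lemma~\ref{lemma:choose-beta} (valid because $n-\mu_i\geq 4\geq 2$ and $\lambda\neq(1^n)$) selects $\beta\subset\lambda$ with $\beta\vdash\mu_i$ and $f_{(\mu_i)}\geq s_\beta$, after which Lemma~\ref{lemma:alphabeta}, with the roles of $\alpha$ and $\beta$ swapped, selects the most dominant $\alpha\vdash n-\mu_i$ with $s_\alpha s_\beta\geq s_\lambda$. By the construction of the canonical LR-tableau, this $\alpha$ equals $(1^{n-\mu_i})$ if and only if $\lambda/\beta$ is a single column of height $n-\mu_i$; in every other case $\alpha\neq(1^{n-\mu_i})$ and the argument is complete. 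When the default $\beta$ leaves a single-column complement, one substitutes a different subpartition of $\lambda$ of size $\mu_i$: for instance, when $\lambda=(\mu_i,1^{n-\mu_i})$ is a hook and the default $\beta=(\mu_i)$ fails, then for $\mu_i\geq 3$ the partition $\beta=(\mu_i-2,1,1)$ lies inside $\lambda$, avoids the exceptions of Theorem~\ref{theorem:swanson}, and gives canonical weight $(3,1^{n-\mu_i-3})$, a partition of $n-\mu_i$ different from $(1^{n-\mu_i})$ because $n-\mu_i\geq 4$.

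The main obstacle is the case $\lambda=(2,1^{n-2})$ with $\mu_i$ even, where the only admissible $\beta\vdash\mu_i$ inside $\lambda$ is $(2,1^{\mu_i-2})$ (which is $(2)$ when $\mu_i=2$) and always leaves a single-column complement. I would handle this by the same sign-representation trick used in Lemma~\ref{lemma:pq}, splitting on the parity of $w_\mu$. If $w_\mu$ is even, then the sign character of $S_{\mu_i}\times S_{n-\mu_i}$ restricts trivially to $C_\mu$, so it appears in $\Ind_{C_\mu}^{S_{\mu_i}\times S_{n-\mu_i}}1$, giving $f_\mu\geq s_{(1^{\mu_i})}s_{(1^{n-\mu_i})}\geq s_{(2,1^{n-2})}$ by a direct Littlewood-Richardson check. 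If $w_\mu$ is odd, then $w_{\tilde\mu}$ is even (since a $\mu_i$-cycle with $\mu_i$ even is an odd permutation), and the sign trick applied to $\tilde\mu$ alone gives $s_{(1^{n-\mu_i})}\leq f_{\tilde\mu}$; combining this with $s_{(2,1^{\mu_i-2})}\leq f_{(\mu_i)}$, which is afforded by Theorem~\ref{theorem:swanson} precisely because $\mu_i$ is even, yields $f_\mu\geq s_{(1^{n-\mu_i})}\cdot s_{(2,1^{\mu_i-2})}\geq s_{(2,1^{n-2})}$. Orchestrating the alternative $\beta$'s for all the remaining hook-like $\lambda$ without invoking a Swanson exception is the principal organizational challenge, and the slack $n-\mu_i\geq 4$ is precisely what affords the needed flexibility.
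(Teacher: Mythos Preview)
Your plan is essentially the paper's: both establish $f_\mu \geq f_{\tilde\mu}\, f_{(\mu_i)}$ via induction in stages and then seek $\alpha\vdash n-\mu_i$, $\beta\vdash \mu_i$ with $f_{\tilde\mu}\geq s_\alpha$, $f_{(\mu_i)}\geq s_\beta$, and $s_\alpha s_\beta\geq s_\lambda$, the only genuine difficulty being when the canonical $\alpha$ collapses to $(1^{n-\mu_i})$. The organizational difference is this: when that collapse occurs, the paper replaces $\alpha$ by $(2,1^{n-\mu_i-2})$ and re-derives $\beta$ from Lemma~\ref{lemma:alphabeta}, which drops the analysis squarely into Case~3 of Lemma~\ref{lemma:pq} (and the analogous cases in Lemma~\ref{lemma:alphabeta1}); all the ``alternative $\beta$'' bookkeeping you flag as the principal organizational challenge has therefore already been carried out and can simply be cited. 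Your plan to search for alternative $\beta$'s directly would work but duplicates that effort. Conversely, your treatment of the terminal obstruction $\lambda=(2,1^{n-2})$ with $\mu_i$ even is cleaner than the paper's: splitting on the parity of $w_\mu$ and, when $w_\mu$ is even, invoking the sign character of $S_{\mu_i}\times S_{n-\mu_i}$ restricted to $C_\mu$ itself, gives $f_\mu\geq s_{(1^{\mu_i})}s_{(1^{n-\mu_i})}\geq s_{(2,1^{n-2})}$ in one stroke and avoids the paper's detour through a second even part $\mu_j$ and the auxiliary composition $\hat\mu$.
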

\begin{proof}
  Since $C_{\tilde\mu}\times C_{(\mu_i)}\leq C_\mu \text{ and } D_{\tilde\mu}\times D_{(\mu_i)}= D_\mu$,
  we have
  \begin{displaymath}
    \Ind_{C_\mu}^{D_\mu}1 \geq \Ind_{C_{\tilde\mu}}^{D_{\tilde\mu}}1 \otimes \Ind_{C_{(\mu_i})}^{D_{(\mu_i)}} 1
  \end{displaymath}
  Inducing to $S_{p_1}\times\dotsb\times S_{p_k}$, and then to $S_{p_1+\dotsb+p_k}$ gives
  \begin{equation}
    \tag{*}
    f_\mu \geq f_{\tilde\mu} f_{(\mu_i)}.
  \end{equation}
  Consider $\lambda\vdash n$, $\lambda\neq (1^n)$.
  To show $f_\mu\geq s_\lambda$, it suffices to show that there exist $\alpha\vdash n-\mu_i$, $\beta\vdash \mu_i$ such that $\alpha\neq(1^{n-\mu_i})$ and $s_\alpha s_\beta\geq s_\lambda$.
  Using Lemma~\ref{lemma:choose-beta}, choose $\beta\vdash \mu_i$ such that $\beta\subset \lambda$ and $f_{(\mu_i)}\geq s_\beta$.
  Choose $\alpha$ using Lemma~\ref{lemma:alphabeta} with the roles of $\alpha$ and $\beta$ reversed.
  If $f_{\tilde\mu}\geq s_\alpha$, we are done.
  Otherwise, since $\tilde\mu$ is persistent, we must have $\alpha=(1^{n-\mu_i})$.

  Since $\lambda\neq (1^n)$, we may replace $\alpha$ by $(2,1^{n-\mu_i-2})$.
  Since $\tilde\mu$ is persistent, $f_{\tilde\mu}\geq s_\alpha$.
  Choose $\beta$ using Lemma~\ref{lemma:alphabeta}.
  If $f_{(\mu_i)}\geq s_\beta$ we are done.
  Otherwise, by Theorem~\ref{theorem:swanson}, if $\mu_i\geq 3$, $\beta$ must be one of $(\mu_i-1,1)$, $(2,1^{\mu_i-2})$ with $\mu_i$ odd, or $(1^{\mu_i})$ with $\mu_i$ even.
  We may proceed as in Case~3 of Lemma~\ref{lemma:pq} and the third part of Lemma~\ref{lemma:alphabeta1} for $\beta=(\mu_i-1,1)$ or $\beta=(2,1^{\mu_i-2})$.
  If $\mu_i=2$, then $\beta=(1,1)$ which we will consider below.
  If $\mu_i=1$, $f_{(1)}\geq s_{\beta}$ trivially.

  It remains to consider $\beta = (1^{\mu_i})$ with $\mu_i\geq 2$ and even.
  In this case, $\lambda/\alpha$ has just one column, so $\lambda$ must be $(2,1^{n-2})$.
  Since $f_{\tilde\mu}\ngeq s_{(1^{n-\mu_i})}$, the permutation $w_{\tilde\mu}$ has to be an odd permutation, i.e., an odd number of the parts $\mu_j$, $j\neq i$ is even.
  In particular, $\mu_j$ is even for some $j\neq i$.
  Let $\hat\mu$ denote the composition obtained from $\tilde\mu$ by deleting $\mu_j$.
  We have:
  \begin{displaymath}
    f_\mu \geq f_{\hat\mu}f_{(\mu_j,\mu_i)}.
  \end{displaymath}
  But both $w_{\hat\mu}$ and $w_{(\mu_j,\mu_i)}$ are even permutations, so
  \begin{displaymath}
    f_{\hat\mu}\geq s_{(1^{n-\mu_i-\mu_j})} \text{ and } f_{(\mu_j,\mu_i)}\geq s_{(1^{\mu_j+\mu_i})}.
  \end{displaymath}
  So $f_\mu \geq s_{(1^{n-\mu_i-\mu_j})}s_{(1^{\mu_j+\mu_i})} \geq s_{(2,1^{n-2})} = s_\lambda$.
\end{proof}
\begin{proof}[Proof of the Main Theorem]
  For $\lambda=(1^n)$ we know that $f_\mu\geq s_\lambda$ if and only if $w_\mu$ is an even permutation.
  If $\lambda\neq (1^n)$, consider the following cases.

  \subsubsection*{Case 1: $\mu$ has only one part}
  Theorem~\ref{theorem:swanson} tells us that $w_\mu$ has an invariant vector in $V_\lambda$ if and only if $\lambda$ is not of the form $(n-1,1)$, $(1^n)$, or $(2,1^{n-2})$ with $n$ odd, which are cases 2 and 3 of the main theorem. 

  \subsubsection*{Case 2: $\mu$ has two parts}
  By Lemmas~\ref{lemma:pq} and~\ref{lemma:alphabeta1} $f_{(p,q)}\geq s_\lambda$ unless one of the following happens.
  \begin{itemize}
  \item $\lambda = (2,2,1^{n-4})$ and $(p,q) = (n-2,2)$ with $n$ (corresponding to case~4 of the main theorem).
  \item $(p,q)\in \{(4,3),(5,3),(3,1)\}$.
    Checking by direct calculation will imply that $f_{(p,q)}\geq s_\lambda$ unless $(p,q)=(3,1)$ and $\lambda=(2,2)$ or $(p,q) = (5,3)$ and $\lambda = (2^4)$ or $(4,4)$ corresponding to cases 5, 7, and 8 of the main theorem. 
  \end{itemize}
  \subsubsection*{Case 3: $\mu$ has at least three parts}
  Assume that $n\geq 11$.

  If $\mu_2\geq 4$ then $(\mu_1,\mu_2)$ is persistent by Lemma~\ref{lemma:pq}.
  Using  Lemma~\ref{lemma:power} repeatedly, we can show that $\mu$ is persistent.

  If $\mu_1\geq 4$ and $\mu_2<4$ and either 2 or 3 occurs at least twice in $\mu$, then using the Lemma~\ref{lemma:simple} and Lemma~\ref{lemma:power} repeatedly, $\mu$ is persistent.

  Otherwise $\mu_i\in\{1,2,3\}$ for $i\geq 2$, with $2$ and $3$ not repeated.
  If $1$ occurs in $\mu$, since $\mu_1\geq 4$, $(\mu_1,1)$ is persistent by Lemma~\ref{lemma:alphabeta1}, so $\mu$ is persistent by repeated use of Lemma~\ref{lemma:power}.
  If $1$ does not occur in $\mu$, then $\mu_1\geq 6$ and $\mu=(\mu_1,3,2)$ (since $\mu$ has at least three parts).
  By Lemma~\ref{lemma:alphabeta1} $(\mu_1,3)$ is persistent.
  Hence $\mu$ is persistent by repeated use of Lemma~\ref{lemma:power}.

  Finally, suppose $\mu_1\leq3$.
  Once again, if either $2$ or $3$ occurs at least twice then $\mu$ is persistent by the Lemmas~\ref{lemma:simple} and ~\ref{lemma:power}.
  Otherwise, $1$ occurs 6 times.
  Since $f_{(1^6)}$ is the Frobenius characteristic of the regular representation of $S_6$, it is persistent.
  Hence $\mu$ is persistent by Lemma~\ref{lemma:power}.

  If $n<11$, a direct computation using SageMath \cite{sagemath} tells us that $\mu=(3,2,1)$ and $\lambda=(2^3)$ (corresponding to case 6) and $\mu=(5,3,2)$ and $\lambda=(2^5)$ (corresponding to case 9) are the only pairs which violate $f_\mu\geq s_\lambda$.

  This completes the proof of the main theorem.
\end{proof}
\subsection*{Acknowledgements}
We thank Arvind Ayyer for bringing this problem to our attention and for fruitful discussions.
We thank Dipendra Prasad and S Viswanath for their help and encouragement.
We thank Sheila Sundaram and Joshua Swanson for their comments on a first draft of this article.
We also thank Jyotirmoy Ganguly, Rijubrata Kundu, Digjoy Paul, and Papi Ray for generously sharing their insights to this problem.
\bibliographystyle{abbrv}
\bibliography{refs}

\end{document}